\newtheorem{remark}{Remark}
\newtheorem{proposition}{Proposition}
\numberwithin{condition}{section}
\numberwithin{assumption}{section}
\numberwithin{remark}{section}
\numberwithin{equation}{section}
\numberwithin{lemma}{section}
\numberwithin{definition}{section}
\numberwithin{theorem}{section}
\numberwithin{proposition}{section}
\numberwithin{table}{section}
\numberwithin{figure}{section}
\numberwithin{theorem}{section}
\numberwithin{corollary}{section}
\numberwithin{property}{section}
\numberwithin{algorithm}{section}
\newcommand{\EQ}{\begin{equation}}
\newcommand{\EN}{\end{equation}}
\newcommand{\EQS}{\begin{equation*}}
\newcommand{\ENS}{\end{equation*}}
\newcommand{\ds}{\displaystyle}
\newcommand{\mycomment}[1]{}
\newcommand*\xbar[1]{%
  \hbox{%
    \vbox{%
      \hrule height 0.5pt
      \kern0.5ex%
      \hbox{%
        \kern-0.1em%
        \ensuremath{#1}%
        \kern-0.1em%
      }%
    }%
  }%
}
\def\n1{n}
\def\argmax{\mathop{\rm arg\,max}}
\newsavebox{\savepar}
\numberwithin{equation}{section}
\numberwithin{table}{section}
\numberwithin{figure}{section}
\def\argmax{\mathop{\rm arg\,max}}
\begin{document}
\title{ Machine Learning and Hamilton-Jacobi-Bellman Equation  \\
    for Optimal Decumulation:  a Comparison Study \\
         {\em } 
}

\author{
    Marc Chen \thanks{David R. Cheriton School of Computer Science,
        University of Waterloo, Waterloo ON, Canada N2L 3G1,
          \texttt{marcandre.chen@uwaterloo.ca}}
    \and
    Mohammad  Shirazi  \thanks{David R. Cheriton School of Computer Science,
        University of Waterloo, Waterloo ON, Canada N2L 3G1,
         \texttt{mmkshirazi@uwaterloo.ca}}
    \and
   Peter A. Forsyth\thanks{David R. Cheriton School of Computer Science,
        University of Waterloo, Waterloo ON, Canada N2L 3G1,
        \texttt{paforsyt@uwaterloo.ca}}
       \and Yuying Li \thanks{David R. Cheriton School of Computer Science,
        University of Waterloo, Waterloo ON, Canada N2L 3G1,
        \texttt{yuying@uwaterloo.ca}}
  }
\maketitle


\begin{abstract}
We propose a novel data-driven neural network (NN) optimization framework for solving
an optimal 
stochastic control problem under stochastic constraints. 
Customized activation functions for the
output layers of the NN are applied, which permits training via  standard unconstrained optimization. 
The optimal solution yields a multi-period asset allocation and decumulation strategy for a holder of a defined contribution (DC) pension plan. The objective function of the optimal control problem is based on expected wealth withdrawn (EW) and expected shortfall (ES) that directly targets left-tail risk. The stochastic bound constraints 
enforce a guaranteed minimum withdrawal each year. 
We demonstrate that the data-driven approach is capable of learning a near-optimal solution by benchmarking it  against the numerical results from a Hamilton-Jacobi-Bellman (HJB)  Partial Differential Equation (PDE) computational framework.

\vspace{5pt}
\noindent
\textbf{Keywords:}  Portfolio decumulation, neural network, stochastic optimal control 

\noindent
\textbf{JEL codes:} G11, G22\\
\noindent
\textbf{AMS codes:} 93E20, 91G, 68T07, 65N06, 35Q93

\end{abstract}

\section{Introduction}

Access to traditional defined benefit (DB) pension plans continues to disappear 
for employees. In 2022, only 15\% of private sector workers in 
the United States had access to a defined benefit plan, 
while 66\% had access to a defined contribution (DC) plan 
\citep{bls_statistics_2023}. In other countries, DB plans have become a thing of the past.

Defined contribution plans leave the burden of creating an investment and withdrawal strategy to the individual investor, which Nobel Laureate William Sharpe referred to as ``the nastiest, hardest problem in finance'' \citep{Sharpe2017}.  Indeed, a review of the literature on decumulation strategies \citep{bernhardt-donnelly:2018, MacDonald2013} shows that balancing all of retirees' concerns with a single strategy is exceedingly difficult. To address these concerns and find an optimal balance between maximizing withdrawals and minimizing the risk of depletion, while guaranteeing a minimum withdrawal, the approach in \citet{forsyth:2022} determines a decumulation and allocation strategy for a standard 30-year investment horizon by formulating it as a problem in optimal stochastic control.  Numerical solutions are
obtained using dynamic programming, which results in a 
Hamilton-Jacobi-Bellman (HJB) Partial Differential Equation (PDE).

The HJB PDE framework developed in \citet{forsyth:2022} maximizes
expected withdrawals and minimizes the risk of running out of savings, measured by the 
left-tail in the terminal wealth distribution.  Since maximizing withdrawals and minimizing risk
are conflicting measures, we use a scalarization technique to compute Pareto optimal points.
A  constant lower bound is imposed on the withdrawal, providing a guaranteed income.  
An upper bound on withdrawal is also imposed,    which can be viewed
as the target withdrawal.
The constraints of no shorting and no leverage are imposed on the investment allocation.

The solution to this  constrained stochastic optimal control problem yields a dynamic stochastic strategy,  
naturally aligning with  retirees' concerns and objectives.
Note that cash flows are   not  mortality weighted, consistent with \citet{Bengen1994}.
This can be justified on the basis of {\em planning to live, not planning
to die} as discussed in \cite{Pfau_2018}.

Our dynamic strategy can be contrasted to traditional strategies 
such as the \emph{Bengen Rule} (4\% Rule), which recommends withdrawing a 
constant 4\% of initial capital each year (adjusted for inflation) 
and investing equal amounts into stocks and bonds \citep{Bengen1994}. Initially proposed in 1994, \citet{scott-jason-sharpe} found the 4\% Rule to 
still be a popular strategy 14 years later, and the near-universal recommendation of the top brokerage and retirement planning groups. Recently there has been acknowledgment 
in the asset management industry that the 4\% Rule is sub-optimal, but wealth
managers still recommend variations of the same constant withdrawal principle \citep{williams_kawashima}. 
The strategy proposed by \citet{forsyth:2022} is shown to be far more efficient than the Bengen 4\% Rule.
Of course, the PDE solution in \citet{forsyth:2022}                                     
is restricted to low dimensions (i.e. a small number of stochastic factors).

In order to remedy some of the deficiencies of PDE methods (such
as in \citet{forsyth:2022}),  
we propose a neural network (NN) based framework without using dynamic programming.
In contrast to the PDE solution approach, our proposed NN approach has the following advantages: 

\begin{enumerate} [label = (\roman*)]
    \item It is data-driven and does not depend on a 
       parametric model. This makes the framework versatile in selecting 
      training data, and less susceptible to model misspecification. 

    \item The control is learned directly, thereby 
     exploiting the low dimensionality of the control \citep{beating_benchmark}. This technique
     thus avoids dynamic programming and the associated error propagation. 
      The NN approach can also be applied to higher dimensional 
       problems, such as those with a large number of assets.
    
    \item If the control is a continuous function of time, the 
     control approximated by the NN framework will reflect this property. 
           If the control is discontinuous \footnote{{\em{Bang-bang}} controls, 
          frequently encountered in optimal control, are discontinuous as a function of time.}, the 
               NN seems to produce a smooth, but quite accurate, 
               approximation.\footnote{For a possible explanation
                of this, see \citet{Ismailov}.} 

\end{enumerate}

Since the  NN only generates an approximate solution to the complex  
stochastic optimal control problem,
  it is essential to  assess 
its accuracy and robustness. 
Rarely is the quality of an NN solution assessed rigorously, since an accurate
solution to the optimal control problem is often not available. 
In this paper, we compare the NN solution to the 
decumulation problem against the ground-truth results 
from the provably convergent HJB PDE method. 

We have previously seen such a comparison in different applications,  
see, e.g.,  \citet{lauriere2021performance} for a comparison study on a fishing control problem.  
As machine learning and artificial intelligence based methods continue to proliferate 
in finance and investment management, it is crucial to demonstrate 
that these methods are reliable and explainable \citep{imf_ai_finance}. We believe that 
our proposed framework and test results make a step forward in demonstrating 
deep learning's potential  for stochastic control problems in finance.

To summarize, the main contributions of this paper are as follows:

\begin{itemize}
\item Proposing an NN framework with suitable activation functions for 
decumulation and allocation controls, which yields 
an approximate solution to the constrained stochastic optimal 
decumulation problem  in \citet{forsyth:2022} by solving a standard unconstrained optimization problem;


\item
Demonstrating  that the NN solution achieves very high accuracy in terms of
the efficient frontier and the decumulation control when compared to the solution from the provably convergent HJB PDE method;

\item
Illustrating that, with a suitably small regularization parameter, the NN allocation strategy can differ significantly from the 
PDE allocation strategy  in the region of high wealth and 
near the terminal time, while  the  relevant performance
statistics remain unaffected. This is due to the fact that the problem
is ill-posed in these regions of the state space unless we add
a small regularization term;

\item 
Testing the NN solution's robustness on out-of-sample and out-of-distribution data, as well as its versatility in using different datasets for training.

\end{itemize}

While other neural network and deep learning methods for optimal stochastic control  problems have been proposed before,  they differ significantly from our approach in their architecture, taking a {\em stacked} neural network approach as in \citet{buehler2019deep, han_weinan_stack, tsang_wong_dpl} or a hybrid dynamic programming and reinforcement learning approach \citep{hure_nn_rl}. On the other hand, our framework uses the same two neural networks at all rebalancing times in the investment scenario. 
Since our NNs take time as an input, the solution will be continuous in time 
if the control is continuous.  Note that the idea of using time
as an input to the NN was also suggested in \citet{lauriere2021performance}.  
According to the taxonomy of sequential decision problems proposed in \citet{Powell2021}, our approach would most closely be described as Policy Function Approximation (PFA).

With the exception
of \citet{lauriere2021performance}, previous
papers do not provide a benchmark for numerical methods, as we do
in this work. Our results show that our proposed NN method 
is able to approximate the numerical results in \citet{forsyth:2022} with high accuracy. 
Especially notable, and somewhat unexpected, is that the {\em bang-bang}
control\footnote{In optimal stochastic control, a bang-bang control is a discontinuous
function of the state.} for the withdrawal is reproduced very closely with the NN method.
\section{Problem Formulation}

\subsection{Overview}

The investment scenario described in \citet{forsyth:2022} concerns an investor with a  portfolio wealth of a specified size,
upon retirement. 
The investment horizon is fixed with a finite number of 
equally spaced rebalancing times (usually annually). 
At each rebalancing time, the investor first chooses how much 
to withdraw from the portfolio and then how to allocate the remaining wealth. 
The investor must withdraw an amount within a specified range. 
The wealth in this portfolio can be allocated to any mix of two given assets, with no shorting or leverage. The assets the investor can access are a broad stock index fund and a constant maturity bond index fund. 

In the time that elapses between re-balancing times, the 
portfolio's wealth will change according to the dynamics of the underlying assets. 
If the wealth of the portfolio  goes below zero (due to minimum withdrawals), the portfolio is liquidated, trading 
ceases, debt accumulates at the borrowing rate, and withdrawals are restricted to the minimum amount. 
At the end of the time horizon, a final withdrawal 
is made and the portfolio is liquidated, yielding the terminal wealth. 

We assume here that the investor has other assets, 
such as real estate, which are non-fungible with investment assets. These other
assets can be regarded as a hedge of last resort, which can be used to fund any accumulated debt 
\citep{Pfeiffer_2013}.  
This is not a novel assumption and is in line with the mental bucketing 
idea proposed by \citet{shefrin-thaler2}. The use of this assumption 
within literature targeting similar problems is also common (see \citet{forsyth2022tontine}). Of course, the objective of the optimal control is to make running out of savings an unlikely event.

The investor's goal then is to maximize the weighted sum of total withdrawals and the 
mean of  the worst 5\% of the outcomes (in terms of terminal wealth).
We term this tail risk measure as Expected Shortfall (ES) at the 5\% level.  
In this section, this optimization problem will be described with the mathematical details 
common to both the HJB and NN methods. 

\subsection{Stochastic Process Model}
\label{Market Model Section}
Let $S_t$ and $B_t$ represent the real (i.e. inflation-adjusted) \emph{amounts} invested in the stock index and a constant
maturity bond index, respectively. These assets are modeled with correlated jump diffusion models,  in line 
with \citet{mitchell_2014}. These parametric
stochastic differential equations (SDEs)  allow us to model non-normal asset returns. The SDEs are used in solving the HJB PDE, and  generating training data with Monte Carlo simulations in the proposed NN framework. For the remainder of this paper, we refer to simulated data using these models as {\em synthetic} data.   

When a jump occurs, $S_t = \xi^s S_{t^-}$, where $\xi^s$ is a random number representing the jump multiplier and $S_{t^-} =   S(t - \epsilon), \epsilon \rightarrow 0^+$ ($S_{t^-}$ is the instant of time before $t$). We assume that $\log(\xi^s)$ follows a double exponential distribution \citep{kou:2002,Kou2004}. The jump is either upward or downward, with probabilities $u^s$ and $1-u^s$ respectively. The density function for $y = \log (\xi^s)$ is

{\color{black}
\begin{linenomath*}
\begin{equation}
f^s(y) = u^s \eta_1^s e^{-\eta_1^s y} {\bf{1}}_{y \geq 0} +
       (1-u^s) \eta_2^s e^{\eta_2^s y} {\bf{1}}_{y < 0}~.
\label{eq:dist_stock}
\end{equation}
\end{linenomath*}
}

We also define
{\color{black}
\begin{linenomath*}
\begin{eqnarray}
\gamma^s_{\xi} &= &E[ \xi^s -1 ]
          =   \frac{u^s \eta_1^s}{\eta_1^s - 1} + 
          \frac{ ( 1 - u^s ) \eta_2^s }{\eta_2^s + 1}  -1 ~.
\end{eqnarray}
\end{linenomath*}
}

The starting point for building the jump diffusion model is a standard geometric Brownian motion, with drift rate $\mu^s$ and volatility $\sigma^s$. A third term is added to represent the effect of jumps, and a 
compensator is added to the drift term to preserve the expected drift rate. For stocks, this gives the 
following stochastic differential equation (SDE) that describes how $S_t$ (inflation adjusted)
evolves in the absence of a control:

\begin{eqnarray}
  \frac{dS_t}{S_{t^-}} &= &\left(\mu^s -\lambda_\xi^s \gamma_{\xi}^s \right) \, dt + 
    \sigma^s \, d Z^s +  d\left( \ds \sum_{i=1}^{\pi_t^s} (\xi_i^s -1) \right) ,
  \label{jump_process_stock}
\end{eqnarray}
where $d Z^s$ is the increment of a Wiener process,
$\pi_t^s$ is a Poisson process with positive intensity parameter $\lambda_\xi^s$, and $\xi_i^s ~\forall i$ are i.i.d.\ positive random variables having distribution (\ref{eq:dist_stock}).
Moreover, $\xi_i^s$, $\pi_t^s$, and $Z^s$ are assumed to all be mutually independent.

As is common in the practitioner literature, we directly model the returns
of a constant maturity (inflation adjusted) bond index by a jump diffusion process \citep{mitchell_2014,Lin_2015}.
Let the amount in the constant maturity bond index be $B_{t^-} =  B(t - \epsilon), \epsilon \rightarrow 0^+$.
In the absence of a control, $B_t$ evolves as
\begin{eqnarray}
\frac{dB_t}{B_{t^-}} &= &\left(\mu^b -\lambda_\xi^b \gamma_{\xi}^b  
   + \mu_c^b {\bf{1}}_{\{B_{t^-} < 0\}}  \right) \, dt + 
  \sigma^b \, d Z^b +  d\left( \ds \sum_{i=1}^{\pi_t^b} (\xi_i^b -1) \right) ,
\label{jump_process_bond}
\end{eqnarray}

\noindent where the terms in Equation (\ref{jump_process_bond}) are defined analogously to Equation (\ref{jump_process_stock}). In particular, $\pi_t^b$ 
is a Poisson process with positive intensity parameter
$\lambda_\xi^b$, $\gamma_{\xi}^b = E[ \xi^b -1 ]$, and $y=\log(\xi^b)$ has the same 
distribution as in 
equation (\ref{eq:dist_stock}) (denoted by $f^b(y)$) 
with distinct parameters, $u^b$, $\eta_1^b$, and $\eta_2^b$. Note that $\xi_i^b$, $\pi_t^b$, 
and $Z^b$ are assumed to all be mutually independent, as in the stock SDE. 
The term $\mu_c^b {\bf{1}}_{\{B_{t^-} < 0\}}$ represents the extra cost of borrowing (a spread). 

The correlation between the two assets' diffusion processes is $\rho_{sb}$, giving us $d Z^s \cdot d Z^b = \rho_{sb}~ dt$. 
The jump processes are assumed to be independent. 
For further details concerning the justification of this market model, refer to~\cite{forsyth:2022}. 

We define the investor's total wealth at time $t$ as 

\begin{equation}
  \text{Total wealth } \equiv W_t = S_t + B_t.
\end{equation}

Barring insolvency, shorting stock and using leverage (i.e., borrowing) are not permitted, a 
realistic constraint in the context of DC retirement plans. Furthermore, if the wealth ever goes below zero,
due to the guaranteed withdrawals, 
the portfolio is liquidated, trading ceases, and debt accumulates at the borrowing rate.
We emphasize that we are assuming that the retiree has other assets (i.e., residential real estate) which
can be used to fund any accumulated debt.  In practice, this could be done using
a reverse mortgage \citep{Pfeiffer_2013}.

\subsection{Notational Conventions}

We define the finite set of discrete withdrawal/rebalancing times $\mathcal{T}$,

 \begin{eqnarray}
   \mathcal{T} = \{t_0=0 <t_1 <t_2< \ldots <t_{M}=T\}  \label{T_def} ~.
\end{eqnarray}
The beginning of the investment period is $t_0 = 0$. We assume each rebalancing time is evenly spaced, meaning $t_i - t_{i-1} = \Delta t =T/M$ is constant. To avoid subscript clutter in the following, we will occasionally use the notation $S_t \equiv S(t), B_t \equiv B(t)$ and $W_t \equiv W(t)$. At each rebalancing time, $t_i \in \mathcal{T}$, the investor first withdraws an amount of cash $q_i$ from the portfolio and then rebalances the portfolio. At time $T$, there is one final withdrawal, $q_T$, and then the portfolio is liquidated. We assume no taxes are incurred on rebalancing, 
which is reasonable since retirement accounts are typically tax-advantaged. In addition, since trading is infrequent, we assume transaction costs to be negligible \citep{dang-forsyth:2014a}. Given an arbitrary time-dependent function, $f(t)$, we will use the shorthand  

\begin{eqnarray}
  f(t_i^+) \equiv \displaystyle  \lim_{\epsilon \rightarrow 0^+}
          f(t_i + \epsilon)~, ~~& & ~~
      f(t_i^-) \equiv \displaystyle  \lim_{\epsilon \rightarrow 0^+}
          f(t_i - \epsilon)  ~~.
\end{eqnarray}

The multidimensional controlled underlying process is denoted by $X\left(t\right)=
\left( S \left( t \right), B\left( t \right) \right)$,
$t\in\left[0,T\right]$. For the realized state of the system, $x = (s, b)$. 

At the beginning of each rebalancing time $t_i$, the investor withdraws the amount $q_i(\cdot)$, determined by the control at time $t_i$; that is, $q_i(\cdot) = q_i(X(t_i^-)) = q(X(t_i^-), t_i)$. This control 
is used to evolve the investment portfolio from $W_t^-$ to $W_t^+$ 
\begin{eqnarray}
  W(t_i^+) = W(t_i^-) - q_i~, & & t_i \in \mathcal{T} ~.
\end{eqnarray}
Formally, both withdrawal and allocation controls depend on the state of the portfolio before withdrawal, $X(t_i^-)$, but it will be computationally convenient to consider the allocation control as a function of the state after withdrawal since the portfolio allocation is rebalanced after the withdrawal has occurred. Hence, the allocation control at time $t_i$ is $p_i(\cdot) = p_i(X(t_i^+)) = p(X(t_i^+), t_i)$. 

\begin{eqnarray}
  p_i( X(t_i^+)) = p(X(t_i^+), t_i) = \frac{S(t_i^+)}{S(t_i^+) + B(t_i^+)} ~.
\end{eqnarray}
As formulated, the controls depend on wealth only (see \citet{forsyth:2022}
for a proof, assuming no transaction costs). 
Therefore, we make another notational adjustment for the sake
of simplicity and consider $q_i(\cdot)$ to be a function of wealth before withdrawal, $W_i^-$, and $p_i(\cdot)$ to be a function of wealth after withdrawal, $W_i^+$. 

We assume instantaneous rebalancing, which means there are no changes in asset prices in the interval $(t_i^-,t_i^+)$. A control at time $t_i$ is therefore described by a pair $( q_i(\cdot), p_i(\cdot) ) \in $ $\mathcal{Z}(W_i^-,W_i^+, t_i)$, where $\mathcal{Z}(W_i^-,W_i^+, t_i)$ represents the set of admissible control values
for $t_i$. The constraints on the allocation control are no shorting, no leverage (assuming solvency). There are minimum and maximum values for the withdrawal. When wealth goes below zero due to withdrawals ($W_i^+ <0$), trading ceases with debt accumulating at the borrowing rate, and withdrawals are restricted to the minimum. Stock assets are liquidated at the end of the investment period. 
We can mathematically state these constraints by imposing suitable bounds on the value of the controls as follows:

\begin{eqnarray}
  \mathcal{Z}_q(W_i^-,t_i)  & = &
            \begin{cases}
              [q_{\min}, q_{\max} ] ~;~   t_i \in \mathcal{T}
              ~;~  W_i^- > q_{\max}\\
              [q_{\min}, W_i^- ] ~;~   t_i \in \mathcal{T} 
              ~;~ q_{\min} < W_i^- < q_{\max} \\
              \{q_{\min}\} ~;~   t_i \in \mathcal{T}
              ~;~ W_i^- < q_{\min}
          \end{cases}    ~,\label{Z_q_def}\\
     \mathcal{Z}_p (W_i^+,t_i) &=&
           \begin{cases}
                   [0,1] & W_i^+ > 0 ~;~ t_i \in \mathcal{T}~;~ t_i \neq t_{M} \\
                   \{0\} & W_i^+ \leq 0 ~;~ t_i \in \mathcal{T}~;~  t_i \neq t_{M} \\
                   \{0\} &  t_i= T
           \end{cases}    ~,  \label{Z_p_def} \\
           \mathcal{Z}(W_i^-, W_i^+,t_i) & = & 
                \mathcal{Z}_q(W_i^-,t_i) \times \mathcal{Z}_p (W_i^+,t_i) ~.~ 
               \label{Z_def} ~
 \end{eqnarray}

At each rebalancing time, we seek the optimal control for all possible combinations
of $(S(t), B(t))$ having the same total 
wealth \citep{forsyth:2022}.  Hence, the  controls for both withdrawal and allocation are 
formally a function of wealth and time before withdrawal 
$(W_i^-,t_i) $, but for implementation purposes it will be helpful to write 
the allocation as a function of wealth and time after withdrawal $(W_i^+,t_i)$.
The admissible  control set $\mathcal{A}$ can be written as 
\begin{eqnarray}
  \mathcal{A} = \biggl\{
                  (q_i, p_i)_{0 \leq i \leq M} : (q_i, p_i) \in \mathcal{Z}(W_i^-, W_i^+,t_i) 
                \biggr\}  ~.
                \label{P_constraint_set}
\end{eqnarray}
An admissible control $\mathcal{P} \in \mathcal{A}$, can be written as
\begin{eqnarray}
  \mathcal{P} = \{ (q_i(\cdot), p_i(\cdot) ) ~:~ i=0, \ldots, M \} ~.
  \label{control_notation_a}
\end{eqnarray}
It will sometimes be necessary to refer to the tail of the control sequence at  $[t_n, t_{n+1}, \ldots, t_{M}]$, which we define as

\begin{eqnarray}
   \mathcal{P}_n =\{ (   q_n(\cdot) , p_n(\cdot) )  \ldots, (p_{M}(\cdot), q_{M}(\cdot) ) \} ~.
\end{eqnarray}
The essence of the problem, for both the HJB and NN methods 
outlined in this paper, will be to find an optimal control $\mathcal{P}^*$.

\subsection{Risk: Expected Shortfall}

Let $g(W_T)$ be  the probability density of terminal wealth $W_T$ at $t=T$. Then
suppose
\begin{eqnarray}
\int_{-\infty}^{W^*_{\alpha}}  g(W_T) ~dW_T = \alpha~,
\end{eqnarray}
i.e.,  \emph{Pr}$[W_T <  W^*_{\alpha}] = \alpha$,  and $W^*_{\alpha}$ is the  Value at risk (VAR) 
at the level $\alpha$. We then define the Expected Shortfall (ES) as the 
mean of the worst $\alpha$ fraction of the terminal wealth. Mathematically, 
\begin{eqnarray}
{\text{ES}}_{\alpha} = \frac{\int_{-\infty}^{W^*_{\alpha}} W_T ~  g(W_T) ~dW_T }
                      {\alpha} ~. \label{ES_def_1}
\end{eqnarray}
As formulated, a higher ES is more desirable than a smaller ES (Equation (\ref{ES_def_1}) is
formulated in terms of final wealth not losses). 
It will be convenient use the alternate definition of ES
as suggested by \citet{Uryasev_2000},
\begin{eqnarray}
  {\text{ES}}_{\alpha}  & = & 
      \sup_{W^*} E
   \biggl[W^* + \frac{1}{\alpha} \min(  W_T - W^* , 0 )
                       \biggr] ~.
\end{eqnarray}
Under a control $\mathcal{P}$, and initial state $X_0$, this becomes:
\begin{eqnarray}
  {\text{ES}}_{\alpha}( X_0^-, t_0^-)  & = & 
      \sup_{W^*} E_{\mathcal{P}}^{X_0^-, t_0^-}
   \biggl[W^* + \frac{1}{\alpha} \min(  W_T - W^* , 0 ) 
                       \biggr] ~.
\end{eqnarray}
The candidate values of $W^*$ can be taken from the set of possible values of $W_T$.
It is important to note here that we define $\text{ES}_{\alpha}(X_0^-,t_0^-)$ which is the value of $\text{ES}_\alpha$ as seen at $t_0^-$. Hence, $W^*$ is fixed throughout the investment horizon.
In fact, we are considering the induced time consistent strategy, as opposed to the
time inconsistent version of an expected shortfall policy \citep{Strub_2019_a,forsyth_2019_c}.  This issue
is addressed in more detail in Appendix \ref{time_consistent_appendix}.

\subsection{Reward: Expected Total Withdrawals}

We use expected total withdrawals as a measure of reward. Mathematically, we define expected withdrawals (EW) as
\begin{eqnarray}
      {\text{EW}}( X_0^-, t_0^-) = E_{\mathcal{P}}^{X_0^-, t_0^-} 
                                 \biggl[ 
                                    \sum_{i=0}^{M} q_i
                                 \biggr]~.
                  \label{reward_eqn}
\end{eqnarray}
\begin{remark}[No discounting, no mortality weighting]
Note that we do not discount the future cash flows in Equation (\ref{reward_eqn}).  We remind the reader
that all quantities are assumed real (i.e. inflation-adjusted), so that we are effectively assuming a real
discount rate of zero, which is a conservative assumption.  This is also consistent with the approach
used in the classical work of \citet{Bengen1994}.
In addition, we do not mortality weight the cash flows, which is also 
consistent with \citet{Bengen1994}.  See \citet{Pfau_2018} for a discussion
of this approach (i.e. {\em plan to live, not plan to die}).
\end{remark}

\subsection{Defining a Common Objective Function} \label{common_obj}

In this section, we  describe the common objective function used by both the HJB method and the NN method.

Expected Withdrawals (EW) and Expected Shortfall (ES) are conflicting measures.
We use a  scalarization method to determine
Pareto optimal points for this multi-objective problem. 
For a given $\kappa$, we seek the optimal control $\mathcal{P}_{0}$    such that the following is maximized,  
\begin{eqnarray}
  {\text{EW}}( X_0^-, t_0^-)  + \kappa {\text{ES}}_{\alpha}( X_0^-, t_0^-)
      ~.  
      \label{objective_highlevel}
 \end{eqnarray}
We define (\ref{objective_highlevel}) as the pre-commitment EW-ES problem $(PCEE_{t_0}(\kappa))$ and write the problem formally as 

\begin{eqnarray}
(\mathit{PCEE_{t_0}}(\kappa)): ~~~~~~~~~~~~~~~~~~~~~~~~~~~~~~~~~~~~~~~~~~~~
~~~~~~~~~~~~~~~~~~~~~~~~~~~~~~~~~~~~~~~~~~~~~~~
\nonumber \\
    ~J\left(s,b,  t_{0}^-\right)  
     =  \sup_{\mathcal{P}_{0}\in\mathcal{A}}
          \sup_{W^*}
        \Biggl\{
               E_{\mathcal{P}_{0}}^{X_0^-,t_{0}^-}
           \Biggl[ ~\sum_{i=0}^{M} q_i ~  + ~
              \kappa \biggl( W^* + \frac{1}{\alpha} \min (W_T -W^*, 0) \biggr)
              \overbrace{+ \epsilon W_T}^{stabilization}
                    \Biggr. \Biggr. \nonumber \\
           \Biggl. \Biggl. 
                \bigg\vert X(t_0^-) = (s,b)
                   ~\Biggr] \Biggr\} \nonumber \\
    \text{ subject to } 
               \begin{cases}
(S_t, B_t) \text{ follow processes \eqref{jump_process_stock} and \eqref{jump_process_bond}};  
 ~~t \notin \mathcal{T} \\
      W_{i}^+ = S_{i}^{-} + B_{i}^{-}  - q_i \,; ~ X_i^+ = (S_i^+ , B_i^+)  \\
   S_i^+ = p_i(\cdot) W_i^+ \,; 
 ~B_i^+ = (1 - p_i(\cdot) ) W_i^+ \, \\
    ( q_i(\cdot) , p_i(\cdot) )  \in \mathcal{Z}(W_i^-, W_i^+,t_i)  \\
    i = 0, \ldots, M ~;~ t_i \in \mathcal{T}  \\
               \end{cases}  ~.
\label{PCEE_a}
\end{eqnarray}

The $\epsilon W_T$ 
stabilization term serves to avoid ill-posedness in the problem when $W_t \gg W^*$, $t \rightarrow T$,
and has little effect on optimal (ES, EW) or other summary statistics when $|\epsilon| \ll 1$. Further details about this stabilization term and its effects on both the HJB and NN framework will be discussed in Section \ref{results_section}.
The objective function in (\ref{PCEE_a}) serves as the basis for the value function in the HJB framework and the loss function for the NN method. 

\begin{remark}[Induced time consistent policy]
Note that a strategy based on $(\mathit{PCEE_{t_0}}(\kappa))$
is formally a pre-commitment strategy (i.e., not time
consistent).   However, we will assume that the retiree actually follows the induced
time consistent strategy \citep{Strub_2019_a,forsyth_2019_c,forsyth:2022}.  This control
is identical to the pre-commitment control at time zero.
See Appendix \ref{time_consistent_appendix} for more discussion of this subtle point.
In the following, we will refer to the strategy determined by (\ref{PCEE_a}) 
as the EW-ES optimal control, with the understanding that this 
refers to the induced time consistent control at any time $t_i > t_0$. 
\end{remark}

\section{HJB Dynamic Programming  Optimization Framework}\label{algo_section}

The HJB framework uses dynamic programming, 
creating sub-problems from each time step in the problem and
moving backward in time. 
For the convenience of the reader, we will summarize the algorithm in
\citet{forsyth:2022} here.

\subsection{Deriving Auxiliary Function from $\mathit{PCEE_{t_0}}(\kappa)$}

The HJB framework begins with defining auxiliary functions based 
on the objective function (\ref{PCEE_a}) and the underlying 
stochastic processes. An  equivalent problem is then
formulated, which will then be solved to find the optimal value function. 

We begin by interchanging the $\sup_{\mathcal{P}_{0}}$ and $\sup_{W^*}$ operators. This will serve as the starting point 
for the 
HJB solution

\begin{eqnarray}
  \qquad J\left(s,b,  t_{0}^-\right)
        &= &  \sup_{W^*} \sup_{\mathcal{P}_{0}\in\mathcal{A}}
                \Biggl\{
                 E_{\mathcal{P}_{0}}^{X_0^-,t_{0}^-}
             \Biggl[ ~\sum_{i=0}^{M} q_i ~  + ~
                \kappa \biggl( W^* + \frac{1}{\alpha} \min (W_T -W^*, 0) \biggr) 
                \Biggr. \Biggr. \nonumber \\
        & & ~~~~ \Biggl. \Biggl.
               + \epsilon W_T
                  \bigg\vert X(t_0^-) = (s,b)
                     ~\Biggr] \Biggr\} ~  
               \label{pcee_inter} ~.
  \end{eqnarray}

The auxiliary function which needs to be computed in the dynamic programming framework  at each time  
$t_n$ will  have an associated strategy for any $t_n > 0$ that is equivalent with the solution of $\mathit{PCEE_{t_0}}\left(\kappa \right)$ for a fixed $W^*$. For a full discussion of pre-commitment and time-consistent ES strategies, we refer 
the reader to \citet{forsyth_2019_c}, which also includes a proof with similar steps of how the following auxiliary function is derived from (\ref{pcee_inter}).  
Including  $W^*$ in the state space gives us the 
expanded state space $\hat{X} = (s,b,W^*)$. The auxiliary function 
$V(s, b, W^*, t) \in \Omega = [0,\infty) \times (-\infty, +\infty) \times (-\infty, +\infty)  
\times [0, \infty)$ is defined as,

\begin{eqnarray}
  V(s, b, W^*, t_n^-) & = & \sup_{\mathcal{P}_{n}\in\mathcal{A}_n}
       \Biggl\{
              E_{\mathcal{P}_{n}}^{\hat{X}_n^-,t_{n}^-}
          \Biggl[
               \sum_{i=n}^{M} q_i +
          \kappa
            \biggl(
                 W^* + \frac{1}{\alpha} \min((W_T -W^*),0) 
              \biggr) \Biggr. \Biggr. \nonumber \\
    & & ~~~ \Biggl. \Biggl. + \epsilon W_T
             \bigg\vert \hat{X}(t_n^-) = (s,b, W^*)  \Biggr]
                  \Biggr\}~. \nonumber \\
                   ~  \nonumber \\
    \text{ subject to } & &
 \begin{cases}
(S_t, B_t) \text{ follow processes \eqref{jump_process_stock} and \eqref{jump_process_bond}};  
~~t \notin \mathcal{T} \\
     W_{i}^+ = S_{i}^{-} + B_{i}^{-}  - q_i \,; ~ \hat{X}_i^+ = (S_i^+ , B_i^+, W^*)  \\
  S_i^+ = p_i(\cdot) W_i^+ \,; 
~B_i^+ = (1 - p_i(\cdot) ) W_i^+ \, \\
   ( q_i(\cdot), p_i(\cdot) )  \in \mathcal{Z}(W_i^-, W_i^+ ,t_i)   \\
   i = n, \ldots, M ~;~ t_i \in \mathcal{T}  \\
              \end{cases}  ~.
            \label{expanded_1}
\end{eqnarray}

\subsection{Applying Dynamic Programming at Rebalancing Times}

The principle of dynamic programming is applied at each $t_n \in \mathcal{T}$ 
on (\ref{expanded_1}). As usual, the optimal control needs to be 
computed in  reverse time order. We split the $\sup_{\mathcal{P}_n}$ 
operator into $\sup_{q \in \mathcal{Z}_q}\sup_{p \in \mathcal{Z}_p(w^- - q, t) }$.

\begin{eqnarray}
  V(s,b,W^*, t_n^-) & = & 
                        \sup_{q \in \mathcal{Z}_q}  ~~~\sup_{p \in \mathcal{Z}_p(w^- - q, t) } 
                      \biggl\{ q+
                           \biggl[ 
                                 V(  (w^- -q) p ,  (w^- - q) (1-p) , W^*, t_n^+)
                                  \biggr]
                                              \biggr\} \nonumber \\
                    & = & \sup_{ q \in Z_q}  \biggl\{ q + 
                                \biggl[ \sup_{ p \in \mathcal{Z}_p( w^- - q, t)}
                                 V(  (w^- -q) p ,  (w^- - q) (1-p) , W^*, t_n^+)
                                  \biggr]
                                              \biggr\} \nonumber \\
                     & & w^- = s+b  ~ . \label{dynamic_a}
\end{eqnarray}

Let $\xbar{V}$ denote the upper semi-continuous envelope of $V$, which will have already been computed as the algorithm progresses 
backward through time.
The optimal allocation control $p_n(w,W^*)$ at time $t_n$
is determined from 

\begin{eqnarray}
  p_n(w, W^*) & = & \left\{
         \begin{array}{lc}
                   \displaystyle  
         \argmax_{ p^{\prime} \in [0,1] } {\xbar{V}}( w p^{\prime}, w (1 - p^{\prime}), W^*, t_n^+),
                       &  w > 0 ~;~ t_n \neq t_M\\
                        0, &  w   \leq 0 ~{\mbox{ or }} t_n = t_M
          \end{array}
               \right.  ~.
          \label{expanded_3}
\end{eqnarray}
The control  $q$ is then determined from
\begin{eqnarray}
   q_n(w,W^*) & = & \argmax_{ q^{\prime} \in \mathcal{Z}_q }
                 \biggl\{
                    q^{\prime} 
                  + {\xbar{V}}( (w -q^{\prime}) p_n( w -q^{\prime}  , W^*) ,
                       (w -q^{\prime}) ( 1 - p_n( w -q^{\prime} , W^*) ) , W^*, t_n^+)
                  \biggr\}  ~. \nonumber \\
               \label{q_opt}
\end{eqnarray}
Using these controls for $t_n$, the solution is then advanced backwards across time from $t_n^+$ to $t_n^-$ by

\begin{eqnarray}
  V( s, b, W^*,t_n^-) & = & q_n( w^-, W^*) + {\xbar{V}}( w^+ p_n(w^+,W^*), w^+(~ 1 - p_n(w^+,W^*) ~), W^*, t_n^+)
                         \nonumber \\
                      & &~~~~~~ w^- = s + b~;~ w^+ = s + b - q_n(w^-, W^*) ~. \nonumber \\
               \label{expanded_4}
\end{eqnarray}
At $t=T$, we have the terminal condition
\begin{eqnarray}
V(s, b, W^*,T^+) & = &  \kappa \biggl( 
                               W^* + \frac{\min( (s+b -W^*), 0 )}{\alpha} 
                               \biggr) ~. \label{expanded_5}
\end{eqnarray}

\subsection{Conditional Expectations between Rebalancing Times}
For $t \in(t_{n-1},t_n)$, there are no cash flows, discounting (all quantities are inflation-adjusted), 
or controls applied. Hence the tower property gives,
for $0 <h < (t_n - t_{n-1})$,
\begin{eqnarray}
   V(s,b,W^*, t) & = & E\biggl[ 
                   V( S(t+h), B(t+h), W^*, t+h) \big\vert S(t) = s, B(t) = b 
                         \biggr] ~;~ t \in( t_{n-1}, t_n -h) ~. \nonumber \\
        \label{expanded_6}
\end{eqnarray}
To find this conditional
expectation based on parametric models of the
stock and bond processes, Ito's Lemma for jump processes \citep{Cont_book} 
is first applied using Equations (\ref{jump_process_stock}) and (\ref{jump_process_bond}). 
For details of the  resulting partial integro differential equation (PIDE), 
refer to \citet{forsyth:2022} and Appendix \ref{PIDE_all}. In computational practice, 
the resulting PIDE is solved using Fourier methods discussed in \citet{forsythlabahn2017}.

\subsection{Equivalence with $\mathit{PCEE_{t_0}}(\kappa)$}

Proceeding backward in time, the auxiliary function $V(s,b, W^*,t_0^-)$ is determined
at time zero.  Problem $\mathit{PCEE_{t_0}}\left(\kappa \right)$ is then solved
using a final optimization step
\begin{eqnarray}
  J(s,b,t_0^-) = \sup_{W^\prime} V(s,b,W^{\prime},t_0^-)~.
             \label{expanded_equiv}
\end{eqnarray}
Notice that $V(s,b,W^{\prime},t_0^-)$ denotes the auxiliary function for the beginning 
of the investment period, and represents the last step (going backward)
in solving the dynamic programming formulation. To obtain this, we begin with Equation (\ref{expanded_5}) and recursively work backwards in time; then we obtain Equation (\ref{PCEE_a}) by interchanging $\sup_{W^{\prime}} \sup_{\mathcal{P}} $ in the final step. 

This formulation (\ref{expanded_1}-\ref{expanded_6}) is equivalent to problem $PCEE_{t_0}(\kappa)$. For a summary of computational details, refer to Appendix \ref{appendix_PIDE} or see \citet{forsyth:2022}.

\section{Neural Network Formulation} \label{NN_formulation_section}

As an alternative to the HJB framework, we develop a neural network  framework to solve the stochastic optimal 
control problem \eqref{PCEE_a}, which has the following characteristics:

\begin{enumerate}[label=(\roman*)]
\item The NN framework is data driven, which does not require a parametric model being specified. This avoids explicitly postulating parametric stochastic processes and the estimation of associated parameters.
In addition, this allows us to add  
auxiliary market signals/variables (although we do not exploit this idea in this work).

\item 
The NN framework avoids the  computation
of high-dimensional conditional expectations by solving for the control at all 
times directly from a single standard unconstrained optimization, 
instead of using dynamic programming (see \citet{beating_benchmark} for a discussion
of this). Since the control is low-dimensional, 
the  framework can exploit this to avoid the 
{\em curse of dimensionality} by solving for the control directly, 
instead of via value iteration such as in the HJB dynamic programming 
method \citep{beating_benchmark}. Such an approach also avoids 
backward error propagation through rebalancing times.

\item 
If the optimal control is a continuous function of time and state, the control approximated by the NN will reflect this property. If the optimal control is discontinuous, the NN approximation produces a smooth approximation. While not required by the original problem formulation in (\ref{PCEE_a}),  this continuity property likely leads to practical benefits
for an investment policy.

\item 
The NN method is further scalable in the sense that it could be easily adapted to problems with longer horizons or higher rebalancing frequency without significantly increasing the computational complexity of the problem. 
This is in contrast to existing approaches using a stacked neural network approach \citep{tsang_wong_dpl}.

\end{enumerate}

We now formally describe the proposed NN framework and demonstrate the aforementioned properties. We approximate the 
control in $\mathcal{P}$ directly by using feed-forward, fully-connected neural networks. 
Given parameters  $\bm{\theta}_p$ and $\bm{\theta}_q$,  i.e. NN weights and biases,   $\hat{p}(W(t_i), t_i, \bm{\theta}_p)$ and $\hat{q}(W(t_i), t_i, \bm{\theta}_q)$  approximate the controls $p_i$ and $q_i$ respectively,
  \begin{eqnarray}
    \hat{q}(W_i^-, t_i^-, \bm{\theta}_q) \simeq q_i(W_i^-)  ~;~ i=0, \ldots, M 
    \label{qhat_def} \nonumber \\ 
    \hat{p}(W_i^+, t_i^+, \bm{\theta}_p) \simeq p_i(W_i^+)  ~;~ i=0, \ldots, {M-1} \nonumber \\
    \hat{\mathcal{P}} = \{(\hat{q}(\cdot), \hat{p}(\cdot))\} \simeq \mathcal{P} ~~~~~~~~~~~~~~
    \nonumber 
    \label{Phat_def}
  \end{eqnarray}

The functions  $\hat{p}$ and $\hat{q}$ take time as one of the inputs, 
and therefore we can use just two NN functions to approximate  control $\mathcal{P}$ across time instead of defining a NN at each rebalancing time. In this section, we discuss how we solve  problem (\ref{PCEE_a}) using this approximation and then provide a description of the NN architecture that is used. We  discuss the precise formulation used by the NN, including activation functions that encode the stochastic constraints. 

\subsection{Neural Network Optimization for  $\mathit{PCEE_{t_0}}(\kappa)$ }

We begin by  describing the NN optimization problem based on the stochastic 
optimal control problem (\ref{PCEE_a}). We first recall  that, in the 
formulation in Section \ref{algo_section}, controls $q_i$ and $p_i$ are  functions of wealth only. 
Our goal is to choose  NN weights $\bm{\theta}_p$ and $\bm{\theta}_q$  by solving (\ref{PCEE_a}), with $\hat{q}(W_i^-,t_i^-, \bm{\theta}_q)$ and $\hat{p}(W_i^+,t_i^+, \bm{\theta}_p)$
 approximating feasible controls $(q_i, p_i) \in 
\mathcal{Z}(W_i^-, W_i^+,t_i)$ for $t_i \in \mathcal{T}$. 
For an arbitrary set of controls $\hat{\mathcal{P}}$ and wealth level $W^*$, we define the NN 
performance criteria $V_{NN}$ as
    
\begin{eqnarray}
  \qquad V_{NN}(\hat{\mathcal{P}}, W^*, s,b, t_{0}^-) & = & 
                 E_{\hat{\mathcal{P}}_0}^{X_0^-,t_{0}^-}
             \Biggl[ ~\sum_{i=0}^{M} \hat{q_i} ~  + ~
                \kappa \biggl( W^* + \frac{1}{\alpha} \min (W_T -W^*, 0) \biggr) \Biggr.  
                     \nonumber \\
    & & ~~~~~\Biggl.   + \epsilon W_T
                  \bigg\vert X(t_0^-) = (s,b)
                     ~\Biggr]  ~.  \nonumber \\
& & \text{ subject to } 
               \begin{cases}
(S_t, B_t) \text{ follow processes \eqref{jump_process_stock} and \eqref{jump_process_bond}};  
 ~~t \notin \mathcal{T} \\
      W_{i}^+ = S_{i}^{-} + B_{i}^{-}  - q_i \,; ~ X_i^+ = (S_i^+ , B_i^+)  \\
   S_i^+ = \hat{p}_i(\cdot) W_i^+ \,; 
 ~B_i^+ = (1 - \hat{p}_i(\cdot) ) W_i^+ \, \\
    ( \hat{q}_i(\cdot) , \hat{p}_i(\cdot) )  \in \mathcal{Z}(W_i^-, W_i^+,t_i)  \\
    i = 0, \ldots, M ~;~ t_i \in \mathcal{T}  \\
               \end{cases}  ~.
               \label{pcee_nn_a}
  \end{eqnarray}
The optimal value function $J_{NN}$ (at $t_0^-$) is then given by
\begin{eqnarray}
    J_{NN}( s, b, t_0^-) = \sup_{W^*}\sup_{\hat{\mathcal{P}} \in \mathcal{A} } 
                 ~V_{NN}(\hat{\mathcal{P}}, W^*, s,b, t_{0}^-)
  ~.
    \label{PIDE_nn_equivalence}
\end{eqnarray}
Next we  describe the structure of the neural networks and feasibility encoding.

\subsection{Neural Network Framework} \label{NN_framework}

 Consider two fully-connected feed-forward NNs, with $\hat{p}$ and $\hat{q}$ determined by parameter vectors $\bm{\theta}_p \in \mathbb{R}^{\nu_p}$ and $\bm{\theta}_q \in \mathbb{R}^{\nu_q}$ (representing NN weights and biases), respectively. 
The two NNs can differ in the choice of activation functions and in the number of hidden layers and nodes per layer. 
 Each NN takes input of the same form $(W(t_i),t_i) $, but the withdrawal NN $\hat{q}$ takes the 
state variable observed  before withdrawal, 
$(W(t_i^-), t_i)$, and the allocation NN $\hat{p}$ takes the 
state variable observed after withdrawal, $(W(t_i^+), t_i)$.

In order for the NN to generate a feasible control as specified in \eqref{PCEE_nn_estimate}, 
we use a modified sigmoid activation function to scale the output from 
the withdrawal NN $\hat{q}$ according to the $PCEE_{t_0}(\kappa)$ problem's 
constraints on the withdrawal amount $q_i$, as given in Equation (\ref{Z_q_def}). 
This ultimately allows us to perform unconstrained optimization on the NN training parameters. 

Specifically, assuming  $x \in [0,1]$,  the function $f(x) := a + (b-a)x$ scales
the output to be in the range $[a,b]$. We 
restrict withdrawal to $\hat{q}$ in $[q_{\min}, q_{\max}]$. We note
 that this withdrawal range $q_{\max}- q_{\min}$ depends 
on wealth $W^-$,  see from (\ref{Z_q_def}).  Define the range of permitted withdrawal as follows,

$$ \text{range} =
\begin{cases}
              q_{\max} - q_{\min} ~;~  \text{if }  W_i^- > q_{\max}\\
              W^- - q_{\min} ~;~~\text{if }  q_{\min} < W_i^- < q_{\max} \\
              0 ~;~ ~~~~~~~~~~~~~~\text{if }  W_i^- < q_{\min}
\end{cases} ~.
$$
More concisely, we have the following mathematical expression:
$$
\text{range} = \max\left((\min(q_{\max},W^-) - q_{\min}), 0 \right).
$$
Let $z \in \mathbb{R}$ be the NN output before the final output layer of $\hat{q}$. Note that $z$ 
depends on  the input features,  state and time,
before being transformed by the activation function. 
We then have the following expression for the  withdrawal,
\begin{eqnarray*}
\hat{q}(W^-, t, \bm{\theta_q})& = &q_{\min}  + \text{range}  \cdot \biggl( \frac{1}{1+e^{-z}} \biggr)\\
 &= &  q_{\min} + 
           \max\left((\min(q_{\max},W^-) - q_{\min}), 0\right) \biggl( \frac{1}{1+e^{-z}} \biggr)
~.
\end{eqnarray*}
Note that  the sigmoid function $\tfrac{1}{1 + e^{-z}}$  is a mapping from $\mathbb{R} \to [0,1]$.

Similarly, we use a softmax activation function on the  NN output of the $\hat{p}$, in order to impose
 no-shorting and no-leverage constraints.

With these output activation functions, it can be easily verified that    $ \left( \hat{q}_i(\cdot) , \hat{p}_i(\cdot) \right)  \in \mathcal{Z}\left( W_i^-, W_i^+ , t_i \right) $ always.
Using the defined NN, this transforms the problem (\ref{PIDE_nn_equivalence})  
of finding an optimal $\hat{\mathcal{P}}$ 
into  the optimization problem:
\begin{eqnarray}
    \hat{J}_{NN} (s, b, t_0^-) &=& \sup_{W^* \in \mathbb{R}}\sup_{\bm{\theta_q} 
                \in \mathbb{R}^{\nu_q}}\sup_{\bm{\theta_p} \in \mathbb{R}^{\nu_p}} 
           ~\hat{V}_{NN}(\bm{\theta}_q,\bm{\theta}_p, W^*, s,b, t_{0}^-)  \nonumber \\
    &=& \sup_{(W^*,\bm{\theta_q}, \bm{\theta_p}) \in 
             \mathbb{R}^{\nu_q + \nu_p + 1}} ~\hat{V}_{NN}(\bm{\theta}_q,\bm{\theta}_p, W^*, s,b, t_{0}^-)
    \label{pcee_with_inf2}   ~.
\end{eqnarray}
It is worth noting here that, while the original control $\mathcal{P}$ is 
constrained in (\ref{P_constraint_set}), the formulation (\ref{pcee_with_inf2}) 
is an unconstrained  optimization over $\bm{\theta}_q$, $\bm{\theta}_p$, and $W^*$. 
Hence we can solve problem (\ref{pcee_with_inf2}) directly using a standard 
gradient descent technique. In the numerical experiments detailed in 
Sections \ref{results_section} and \ref{testing_section}, we use Adam 
stochastic gradient descent \citep{kingma2017adam} to 
determine the optimal points $\bm{\theta_q}^*$, $\bm{\theta_p}^*$, and $W^*$.

Note that the output of NN $\hat{q}$ yields the amount to 
withdraw, while the output of NN $\hat{p}$ produces asset allocation weights.

Figure \ref{NN_diagram} presents the proposed NN. We emphasize the following key aspects of this NN structure.

\begin{enumerate}[label=(\roman*)]
\item Time is an \emph{input} to both NNs in the framework. Therefore, 
the parameter vectors $\bm{\theta}_q$ and $\bm{\theta}_p$ are constant and do not vary with time. 

\item At each rebalancing time, the wealth observation  before withdrawal is used 
to construct the feature vector for $\hat{q}$.  The resulting withdrawal is then used 
to calculate wealth after withdrawal, which is an input feature for $\hat{p}$.

\item Standard sigmoid activation functions are used at each \emph{hidden layer} output.  

\item The activation function for the withdrawal output is 
different from the activation function for allocation. Control  $\hat{q}$ uses a modified sigmoid function, which is chosen to transform its output according to (\ref{Z_q_def}). Control  $\hat{p}$ uses a softmax activation which ensures that its output gives only positive weights 
for each portfolio asset that sum to one, as specified in (\ref{Z_p_def}). By constraining the NN output this way through proposed activation functions, we can use unconstrained optimization  to train the NN. 
\end{enumerate}

\begin{figure}[ht!]
\centering
\includegraphics[width=0.99\linewidth]{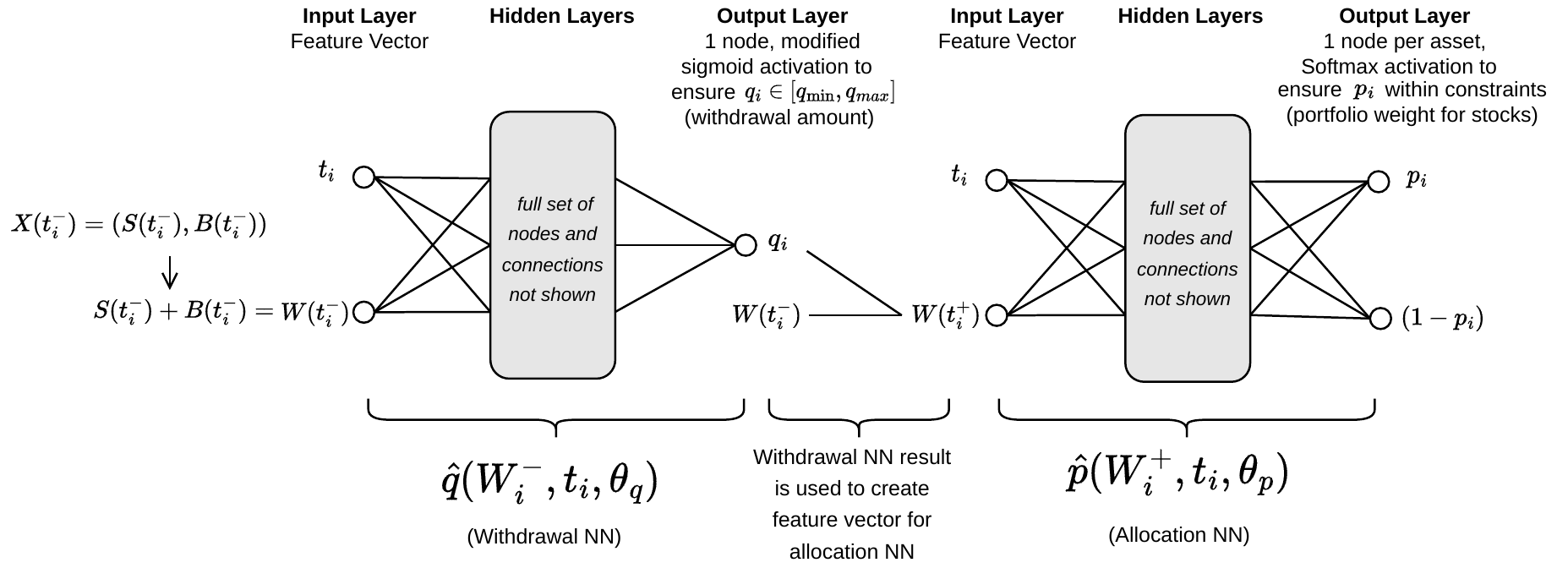}
\caption{Illustration of the NN framework as per Section \ref{NN_framework}. Additional technical details can be found in Appendix \ref{appendix_nn}.}
\label{NN_diagram}
\end{figure}

\subsection{NN Estimate of the Optimal Control}
\label{NN_estimate_of_optimal_control}

Now we describe the training  optimization problem for the  proposed data driven NN framework,  which is 
agnostic to the underlying data generation process. We assume that
a set of asset return trajectories are available,
which are used  to approximate the expectation in (\ref{pcee_nn_a}) for any given control.  
For NN training, we approximate 
the expectation in (\ref{pcee_nn_a}) based on  a finite number of samples as follows:

\begin{multline}
  \hat{V}_{NN}(\bm{\theta}_q,\bm{\theta}_p, W^*, s,b, t_{0}^-)  =  \\
                 \tfrac{1}{N} \sum_{j=1}^{N}
             \Biggl[ ~\sum_{i=0}^{M} \hat{q}( (W_i)^{j}, t_i; \bm{\theta}_q) ~  + ~
                \kappa \biggl( W^* + \frac{1}{\alpha} \min ( (W_T)^j -W^*, 0) \biggr) + \epsilon (W_T)^j \bigg\vert X(t_0^-) = (s,b)
                     ~\Biggr]  ~  
               \\
               \text{ subject to } 
               \begin{cases}
((S_t)^j, (B_t)^j) \text{ drawn from the $j^{th}$ sample of returns};  
 ~~t \notin \mathcal{T} \\
      (W_{i}^+)^j = (S_{i}^{-})^j + (B_{i}^{-})^j  
                - \hat{q} \left( (W_{t_i}^-)^j,t_i, \bm{\theta_q} \right) \,; ~ (X_i^+)^j = (S_i^+ , B_i^+)^j  \\
   (S_i^+)^j = \hat{p} \left( (W_{i}^+)^j, t_i, \bm{\theta_p} \right) ~(W_i^+)^j \,; 
 ~(B_i^+)^j = (1 - \hat{p} \left( (W_{i}^+)^j, t_i, \bm{\theta_p}) \right) ~(W_i^+)^j \, \\
    \left( \hat{q}_i(\cdot) , \hat{p}_i(\cdot) \right)  \in \mathcal{Z}\left( (W_i^-)^j, (W_i^+)^j , t_i \right)  \\
    i = 0, \ldots, M ~;~ t_i \in \mathcal{T}  \\
               \end{cases}  ~,
\label{PCEE_nn_estimate}
\end{multline}

\noindent where the superscript $j$ represents the $j^{\text{th}}$ path of joint asset returns and $N$ is the total number of sampled paths. 
For subsequent benchmark comparison, we generate price paths using processes \eqref{jump_process_stock} and \eqref{jump_process_bond}.
We are, however, agnostic as to the method used to generate these paths.
We assume that the random sample paths are independent, but that
correlations can exist  between returns of different assets.
In addition,  correlation between the returns of different time periods can also be represented,  
e.g., block bootstrap resampling is designed to capture autocorrelation in the time series data.

The optimal parameters obtained by training the neural network are used to generate the control functions $\hat{q}^* (\cdot):= \hat{q}(\cdot ; \bm{\theta_q^*})$ and $\hat{p}^*(\cdot) := \hat{p}(\cdot ; \bm{\theta_p^*})$, respectively. With these functions, we can evaluate the performance of the generated control on testing data sets that are out-of-sample or out-of-distribution. We present the detailed results of such tests in Section \ref{testing_section}.     

\section{Data} \label{data_description_section}

For the computational study in this paper, we use data from the Center for Research in Security Prices (CRSP) 
on a monthly basis over the 1926:1-2019:12 period.\footnote{More specifically, results presented here
were calculated based on data from Historical Indexes, \copyright
2020 Center for Research in Security Prices (CRSP), The University of
Chicago Booth School of Business. Wharton Research Data Services was
used in preparing this article. This service and the data available
thereon constitute valuable intellectual property and trade secrets
of WRDS and/or its third-party suppliers.} The specific indices used are the CRSP 10-year U.S. Treasury 
index for the bond asset\footnote{The 10-year Treasury index was calculated using monthly returns from CRSP dating back to 1941. 
The data for 1926-1941 were interpolated from annual returns in \citet{Homer_rates}. The bond index is constructed
by (i) purchasing a 10-year Treasury at the start of each month, (ii) collecting interest during the month and
(iii) selling the Treasury at the end of the month.} 
and the CRSP value-weighted total return index for the stock asset\footnote{The stock index 
includes all distributions for all domestic stocks trading on major U.S. exchanges.}. 
All of these various indexes
are in nominal terms, so we adjust them for inflation by using the U.S.\
CPI index, also supplied by CRSP. We use real indexes since investors
funding retirement spending should be focused on real (not nominal)
wealth goals.

We use the above market data in two different ways in subsequent investigations: 

\begin{enumerate}  [label = (\roman*)]
    \item \emph{Stochastic model calibration:} Any data set referred to in this paper as {\em synthetic data} 
is generated by parametric stochastic models (SDEs) (as described in Section \ref{Market Model Section}), whose parameters are calibrated to the CRSP data by using the threshold technique \citep{mancini2009,contmancini2011,Dang2015a}. The data is inflation-adjusted so that all parameters reflect real returns. Table \ref{fit_params} shows the results of calibrating the models to the historical data. 
The correlation $\rho_{sb}$ is computed by removing any returns which occur at 
times corresponding to jumps in either series.
See \citet{Dang2015a} for details of the technique
for detecting jumps.

    \item  
    \emph{Bootstrap resampling:} Any data set referred to in this paper as {\em historical data} 
is generated by using the stationary block bootstrap method \citep{politis1994,politis2004,politis2009,dichtl2016testing} to resample the historical CRSP data set. 
This method involves repeatedly drawing randomly sampled blocks 
of random size, with replacement, from the original data set. The
block size follows a geometric distribution with  a specified expected block size.
To preserve correlation between asset returns, we use a paired sampling approach 
to simultaneously draw returns from both time series. This, in effect, shuffles
the original data and can be repeated to obtain however many resampled paths one desires. Since 
the order of returns in the sequence is unchanged within the sampled block, 
this method accounts for some possible serial correlation in 
market data. Detailed pseudo-code for this method of block 
bootstrap resampling is given in \citet{Forsyth_Vetzal_2019a}. 

We note that block resampling is commonly used by  practitioners and
academics (see for example 
\citet{anarkulova2022stocks, dichtl2016testing, scott2017wealth, simonian2022sharpe, cogneau2013block}). 
Block bootstrap resampling will be used to carry out 
robustness checks in Section \ref{testing_section}.
Note that for any realistic number of samples and expected
block size, the probability of repeating a resampled path
is negligible \citep{ni2022optimal}.
    
 One important parameter for the block resampling method is the expected block size.     
\citet{forsyth:2022} determines that a reasonable expected block size for paired resampling is about 
three months. The algorithm presented in \citet{politis2009} is used to determine the optimal expected block size for the bond and stock returns separately; see Table \ref{auto_blocksize}.
Subsequently, we will also test the sensitivity of the
results to a range of block sizes from 1 to 12 months in numerical experiments. 

\end{enumerate}

To train the neural networks, we require that the number of sampled paths, $N$, be sufficiently large to fully represent the underlying market dynamics. 
Subsequently, we  first generate training data through Monte Carlo simulations of the parametric models described in \eqref{jump_process_stock} and \eqref{jump_process_bond}. 
We emphasize however that in the proposed data driven NN framework,  
we only require  return trajectories of the underlying assets.  
In later sections,  
we present results from NNs trained on  non-parametrically generated data, e.g.
resampled historical data. 
We also demonstrate the NN framework's robustness on test data. 

\section{Computational Results} \label{results_section}
We now present and compare performance of the  optimal
control from the HJB PDE and NN method respectively on synthetic data, with 
investment specifications given  in Table \ref{base_case_1}. 
Each strategy's performance is measured w.r.t. to the objective function  in (\ref{PCEE_a}), which is   a weighted reward (EW) and risk (ES) measure.
To trace out an efficient frontier in the (EW,ES) plane,
we vary $\kappa$ (the curve represents the (EW,ES)  performance  on a set of optimal Pareto points).   

We first present strategies computed  from 
the HJB framework described in Section 3.  We verify that the numerical
solutions are sufficiently accurate, so that this solution can be regarded
as ground truth.
We then present results computed using the NN framework of Section \ref{NN_formulation_section}, 
and demonstrate the accuracy of the NN results by comparing 
to the  ground truth  computed from the HJB equation. 
We carry out further analysis by selecting an \emph{interesting} point on the (EW,ES) efficient frontier, in particular $\kappa = 1.0$, to study in greater detail. The point $\kappa = 1.0$ is at the \emph{knee} of the efficient frontier, which makes it desirable in terms of risk-reward tradeoff (picking the exact $\kappa$ will be a matter of investor preference, however). This notion of the knee point is loosely based on the concept of a \emph{compromise solution} of multi-objective optimization problems, which selects the point on the efficient frontier with the 
minimum distance to an unattainable ideal point \citep{pareto_optimality}. 
For this knee point of $\kappa = 1.0$, we analyze the controls and 
wealth outcomes under both frameworks. We also discuss some key differences 
between the HJB and NN frameworks' results and their implications.

\begin{table}[hbt!]
\begin{center}
\begin{tabular}{lc} \toprule
Investment horizon $T$ (years) & 30  \\
Equity market index & CRSP Cap-weighted index (real) \\
Bond index & 10-year Treasury (US) (real) \\
Initial portfolio value $W_0$  & 1000 \\
Cash withdrawal times & $t=0,1,\ldots, 30$\\
Withdrawal range   & $[35, 60]$\\
Equity fraction range & $[0,1]$\\
Borrowing spread $\mu_c^{b}$ & 0.0 \\
Rebalancing interval (years) & 1  \\
Market parameters & See Appendix~\ref{appendix_market_model} \\ \bottomrule
\end{tabular}
\caption{Problem setup and input data.  Monetary units: thousands of dollars.
\label{base_case_1}}
\end{center}
\end{table}

\subsection{Strategies Computed  from HJB Equation} \label{PIDE_results_section}

 We carry out a convergence test for the HJB framework by tracing the 
efficient frontier (i.e. varying the scalarization parameter $\kappa$) 
for solutions of varying refinement levels (i.e.
number of grid points in the $(s,b)$ directions). Figure \ref{frontier_convergence_plot} shows these efficient frontiers. As the efficient frontiers from  various grid sizes all practically overlap each other,  this demonstrates  convergence of solutions computed from solving HJB equations. 
Table \ref{conservative_accuracy} shows  a convergence test for a single point on the frontier. 
The convergence is roughly first-order (for the value function). This convergence test justifies the use of the 
HJB framework results as a ground-truth.

\begin{figure}[htb!]

\centering
\includegraphics[width=0.4\linewidth]{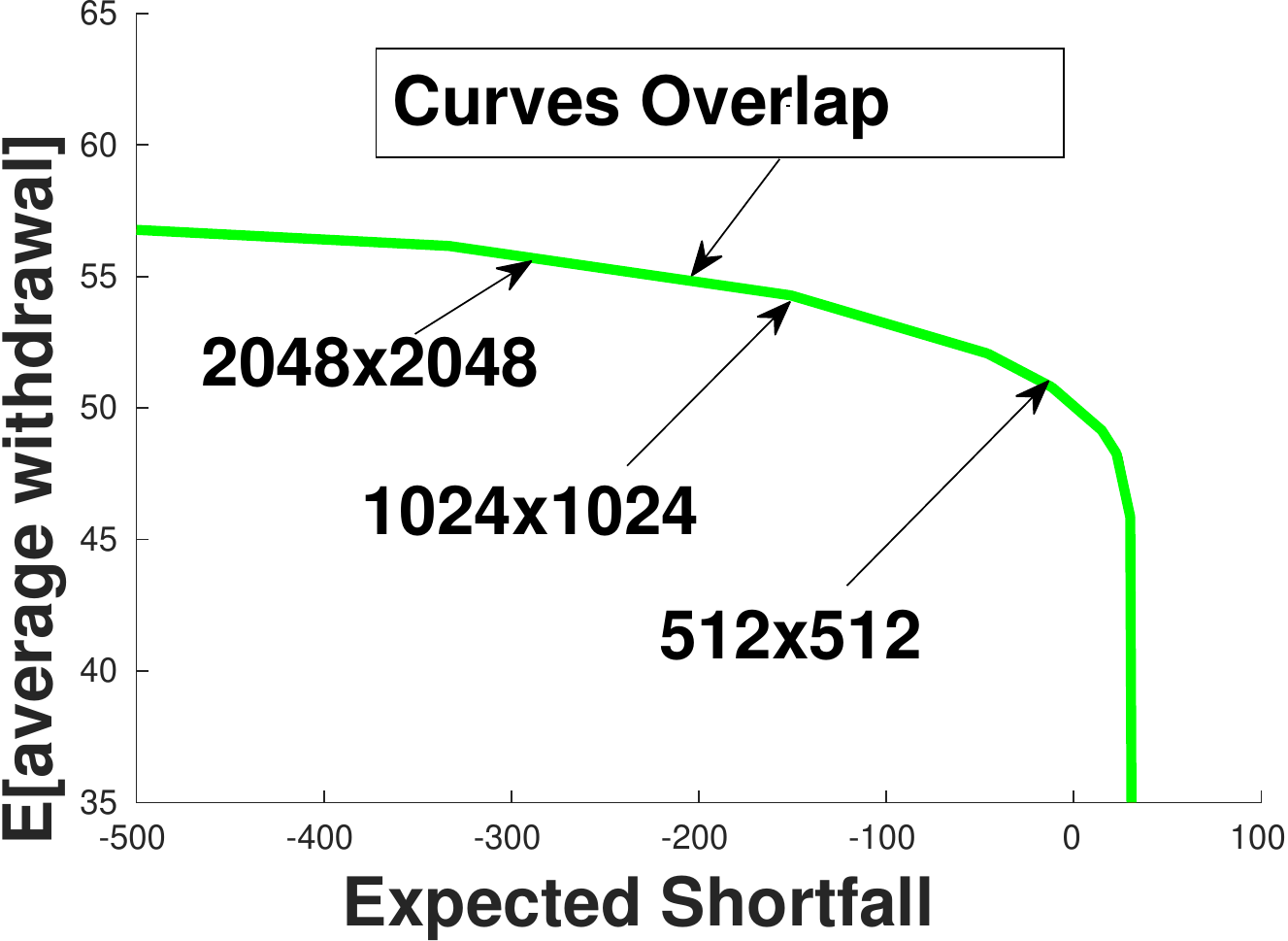}

\caption{EW-ES frontier, computed from  problem (\ref{PCEE_a}). Note: Scenario in Table \ref{base_case_1}. Comparison of HJB solution performance with varying grid sizes. HJB solution performance computed on $2.56 \times 10^6$ observations of synthetic data. Parameters for synthetic data based on cap-weighted real CRSP, real 10 year treasuries ({see Table \ref{fit_params}}).
$q_{min} = 35, q_{\max} = 60$.
$\epsilon = 10^{-6}$.
Units: thousands of dollars.}
\label{frontier_convergence_plot}
\end{figure}

\begin{remark}[Effect of Stabilization Term $\epsilon W_T$] \label{stabilization_remark}
Recall  the stabilization term, $\epsilon W_T$, introduced in (\ref{PCEE_a}). 
We now provide motivation for its inclusion, and observe its effect on the control $\hat{\mathcal{P}}$. When 
$W_t \gg W^*$ and $t \to T$, the control will only weakly affect the objective function. This is because, in 
this situation, $Pr[ W_T < W^*] \simeq 0$ and thus the allocation
control will have little effect on the ES term in the objective 
(recall that $W^*$ is held constant for the induced time consistent
strategy, see Appendix \ref{time_consistent_appendix}). 
In addition, the withdrawal is capped at $q_{\max}$ for very high values of $W_t$,
so the withdrawal control does not depend on $W_t$
in this case either.
The stabilization term can be used to alleviate ill-posedness of the problem in this region.
\end{remark}

In Figure \ref{heat_map_fig_neg_eps}, we present the heat map of the allocation control  computed from the HJB framework. Subplot (a) presents allocation control heat map 
for  a small positive stabilization parameter $\epsilon = 10^{-6}$, while Subplot (b) presents  allocation control heat map with  $\epsilon = -10^{-6}$. In the ill-posed region (the top right region of the heat maps), the presence of $\epsilon W_T$, with $\epsilon = 10^{-6}$,  forces the control to invest 100\% in stocks to generate 
high terminal wealth. Conversely, changing the stabilization parameter to be negative ($\epsilon = -10^{-6}$) forces the control to invest completely in bonds. 

We observe that the control behaves differently only at high level of wealth as $t \to T$  in both cases.   The 5th and the 50th percentiles of control on the synthetic data set behave similarly in both the positive and negative $\epsilon$ cases. The 95th percentile curve tends towards higher wealth during later phases of the investment period when the $\epsilon$ is positive (Figure \ref{PIDE_heat_stocks_35_60_poseps}), whereas the curve tends downward when $\epsilon$ is negative (Figure \ref{PIDE_heat_withdraw_35_60_neg_eps}). 
When the magnitude of $\epsilon$ is sufficiently small,  the 
inclusion of $\epsilon W_T$ in the objective function does not change summary statistics (to four decimal places when $ |\epsilon|= 10^{-6}$). While the choice of negative or positive  
$\epsilon$ with small magnitude can lead to different allocation control 
scenarios at high wealth level near the end of time horizon, the choice makes little difference from the perspective of the problem $PCEE_{t_0}(\kappa)$. 
If the investor reaches very high wealth near $T$, the choice between 100\% stocks and 100\% bonds 
does not matter as the investor always ends with $W_T \gg W^*$.
Our experiments show that the control $q$ is unaffected  when the magnitude of  $\epsilon$ is small and 
continues to call for maximum withdrawals at high levels 
of wealth as $t \to T$, just as described in Remark \ref{stabilization_remark}.

Comparing the optimal withdrawal strategy determined
by solving  stochastic optimal control problem \eqref{PCEE_a} 
with a fixed withdrawal strategy 
(both strategies with dynamic asset allocation), \citet{forsyth:2022} finds that the stochastic optimal strategy  \eqref{PCEE_nn_estimate} is much more efficient in withdrawing cash over the investment horizon. 
Accepting a very small amount of additional risk, the retiree can 
dramatically increase total withdrawals. For a more detailed
discussion  of the optimal control,
we refer the reader to \citet{forsyth:2022}.

\begin{figure}[htb!]
\centering
\textbf{} \medskip \par
\centerline{%
\begin{subfigure}[t]{.40\linewidth}
\centering
\includegraphics[width=\linewidth]{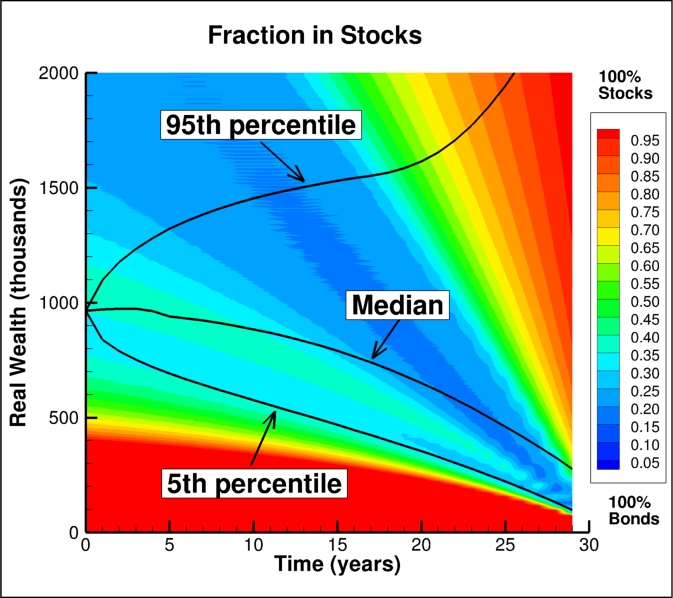}
\caption{Fraction in stocks, HJB Control, $\epsilon = 10^{-6}$}
\label{PIDE_heat_stocks_35_60_poseps}
\end{subfigure}
\hspace{.03\linewidth}
\begin{subfigure}[t]{.40\linewidth}
\centering
\includegraphics[width=\linewidth]{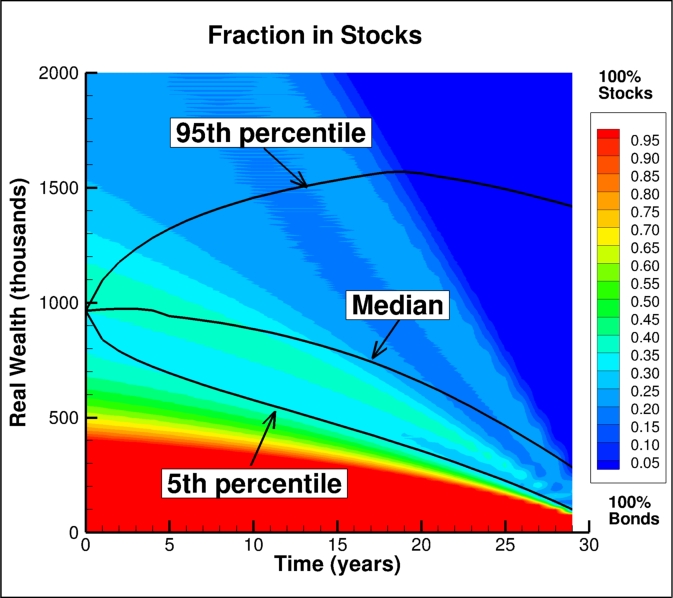}
\caption{Fraction in stocks, HJB Control, $\epsilon = -10^{-6}$}
\label{PIDE_heat_withdraw_35_60_neg_eps}
\end{subfigure}
}
\caption{
Effect of $\epsilon$: fraction in stocks computed from the problem (\ref{PCEE_a}).  Note: investment setup is as in Table \ref{base_case_1}. HJB solution performance computed on $2.56 \times 10^6$ observations of synthetic data. Parameters for synthetic data based on cap-weighted real CRSP, real 10 year treasuries (see Table \ref{fit_params}).
$q_{min} = 35, q_{\max} = 60$, $\kappa = 1.0$. $W^* = 58.0$ for PIDE results.
(a)$~\epsilon = 10^{-6}$. (b)$~\epsilon = -10^{-6}$.
Units: thousands of dollars.
\label{heat_map_fig_neg_eps}}
\end{figure}

\subsection{Accuracy of Strategy Computed  from NN framework} \label{NN_results}

We compute  the NN control following the framework discussed in Section \ref{NN_formulation_section}. 
We compare the efficient frontiers obtained from the HJB equation solution
and the NN solution.
From  Figure \ref{EF_PIDE_nn}, the NN control efficient frontier is almost 
indistinguishable from the HJB control efficient frontier. 
Detailed summary statistics for each computed point on the frontier can 
be found in Appendix \ref{nn_ef_details}, and a comparison of 
objective function values, for the NN and HJB control at each 
frontier point, can be found in Appendix \ref{appendix_obj_comparison}. 
For most points on the frontier, the difference in objective function values, from NN and HJB, is less than $0.1\%$. 
This demonstrates that the accuracy of the NN framework 
approximation of the ground-truth solution is more than adequate,
considering that the difference between the NN solution and
the PDE solution is about the same as the 
estimated PDE error (see Table \ref{conservative_accuracy}).

\begin{figure}[htb!]
\centering
\includegraphics[width=0.4\linewidth]{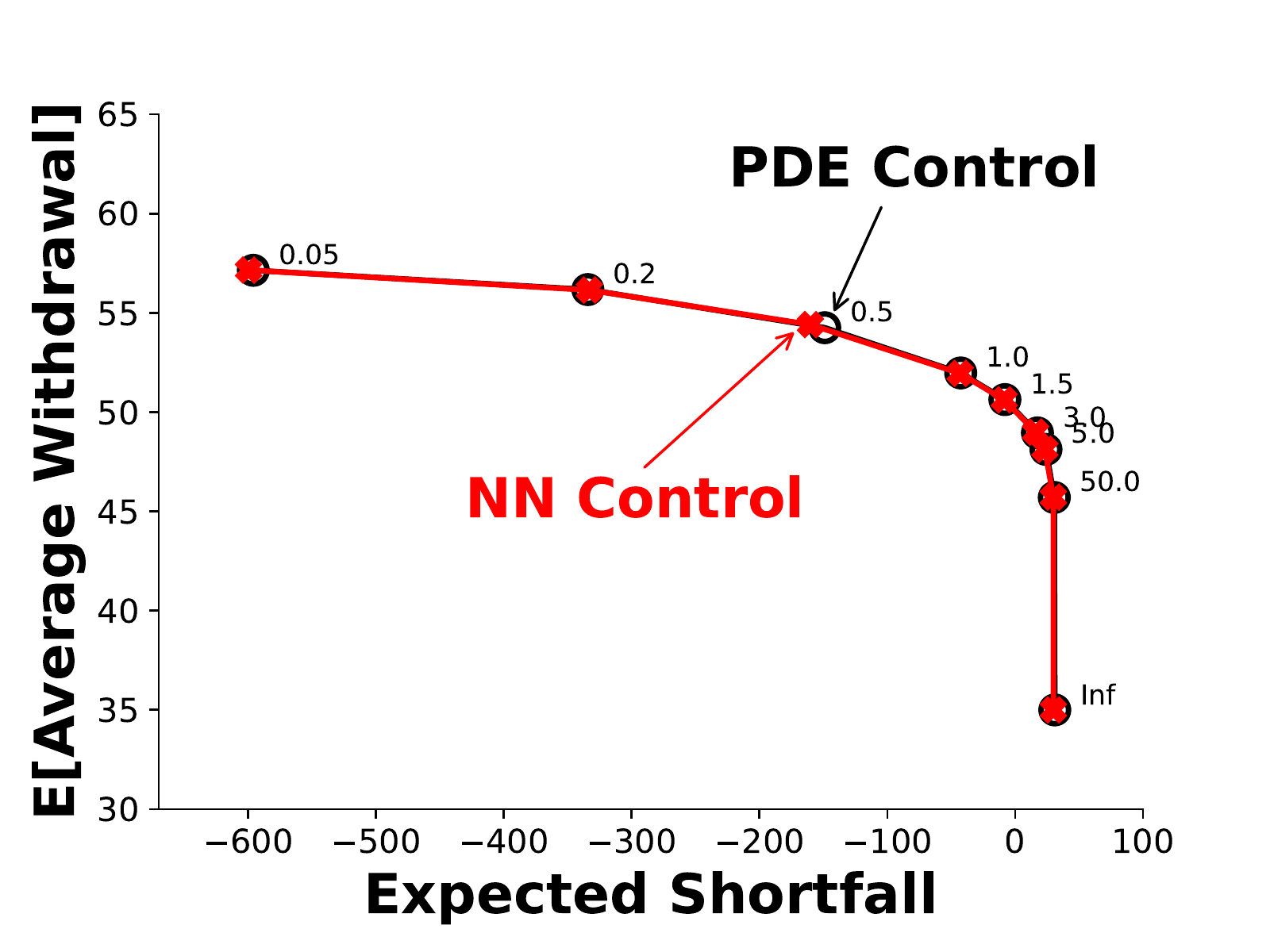}
\caption{Comparison of EW-ES frontier for the Neural Network (NN) and Hamilton-Jacobi-Bellman (HJB)  Partial
Differential Equation (PDE) methods, computed from the problem (\ref{PCEE_a}). Note: investment setup in Table \ref{base_case_1}. HJB solution performance computed on $2.56 \times 10^6$ observations of synthetic data. Parameters for synthetic data based on cap-weighted real CRSP, real 10 year treasuries (see Table \ref{fit_params}). Control computed from the NN model, trained on $2.56 \times 10^6$ observations of synthetic data.
$q_{min} = 35, q_{\max} = 60$.
$\epsilon = 10^{-6}$.
Units: thousands of dollars. Labels on nodes indicate $\kappa$ parameter.}
\label{EF_PIDE_nn}
\end{figure}

We now further analyze the control $\hat{\mathcal{P}}$ produced 
by the NN framework for $\kappa = 1$.  Comparing Figure \ref{nn_heat_withdraw_35_60} with Figure \ref{PIDE_heat_withdraw_35_60}, we observe that the withdrawal control $\hat{q}$ produced by the NN  is practically identical 
to the withdrawal control
produced by the HJB framework. However, 
there are differences in the allocation control heat maps. The NN heat map for allocation control $p$ (Figure \ref{nn_heat_stocks_35_60}) appears most similar to that of the HJB allocation heat map for negative $\epsilon$ (Figure \ref{PIDE_heat_withdraw_35_60_neg_eps}), but it is clear that the NN allocation heat map differs significantly from the HJB heat map for positive $\epsilon$ (Figure \ref{PIDE_heat_stocks_35_60_poseps}) at high level of wealth as $t \to T$. The NN allocation control behaves differently from the HJB controls in this region, choosing a mix of stocks and bonds instead of choosing a 100\% allocation in a single asset. Noting this difference is only at higher level of wealth near $T$, we see that the 5th percentile and the median wealth curves are indistinguishable. The NN control's 95th percentile curve, however, is different and indeed the curve is in between the  95th percentile curves from the negative and positive versions of the HJB-generated control.

Drawing from this, we attribute the NN framework's inability to fully replicate 
the HJB control to the ill-posedness of the optimal control problem 
in the (top-right) region of high wealth levels near $T$. 
The small value of $\epsilon$ means that the stabilization term 
contributes a very small fraction of the objective function value 
and thus has a very small gradient, relative to the first two 
terms in the objective function.  Since we use stochastic gradient 
descent for optimization, we see a very small impact of $\epsilon$. 
Moreover, the data for high levels of wealth as $t \to T$ is very sparse 
and so the effect of the small gradient is further reduced. As a result, the 
NN appears to smoothly extrapolate in this region and therefore avoids investment into a single asset. Recall that in Section \ref{PIDE_results_section}, we stated that the choice in the signs of $\epsilon$,  with small $\epsilon$,  in the stabilization term is somewhat arbitrary and does not affect summary 
statistics. 
Therefore, we see that the controls 
produced by the two methods only differ in irrelevant aspects, 
at least based on the EW and ES reward-risk consideration.

\begin{figure}[htb!]
\centering
\textbf{NN Control Results} \medskip \par
\centerline{%
\begin{subfigure}[t]{.40\linewidth}
\centering
\includegraphics[width=\linewidth]{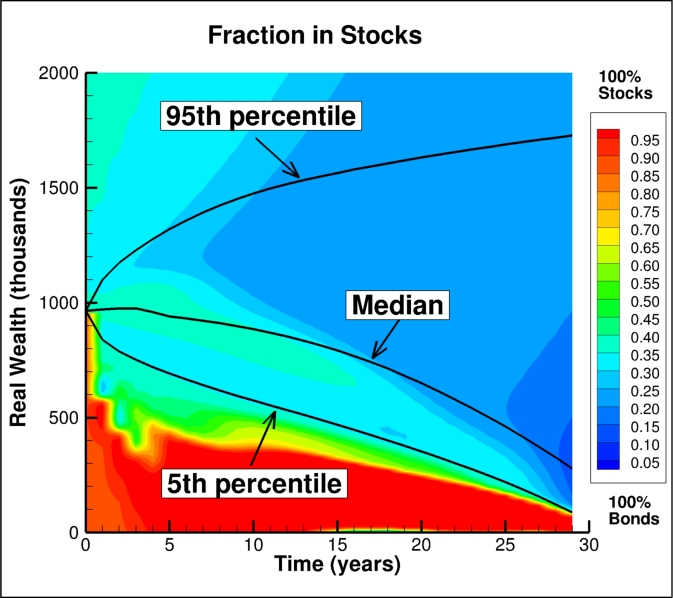}
\caption{Fraction in stocks, NN Control}
\label{nn_heat_stocks_35_60}
\end{subfigure}
\hspace{.03\linewidth}
\begin{subfigure}[t]{.40\linewidth}
\centering
\includegraphics[width=\linewidth]{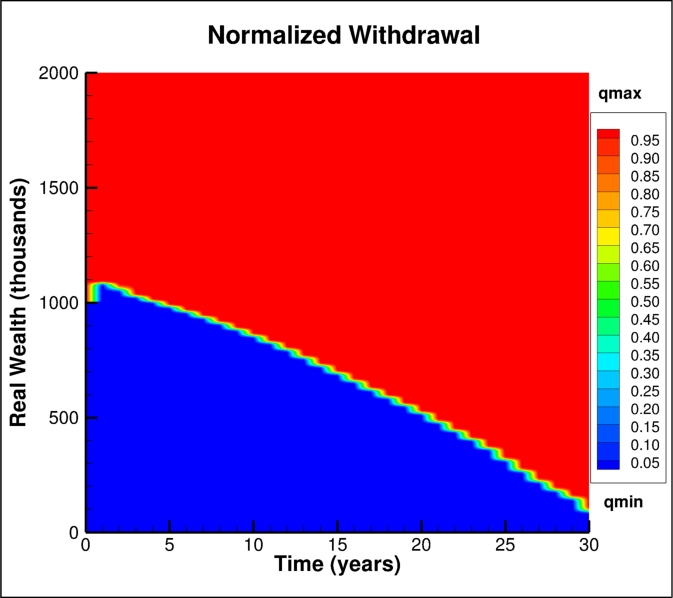}
\caption{Withdrawals, NN Control}
\label{nn_heat_withdraw_35_60}
\end{subfigure}
}
\textbf{ }\par\medskip
\centering 
\textbf{HJB Control Results}  \medskip \par
\centerline{%
\begin{subfigure}[t]{.40\linewidth}
\centering
\includegraphics[width=\linewidth]{Figures/heat_map_corrected_p_control_percentiles_peter.jpeg}
\caption{Fraction in stocks, HJB Control}
\label{PIDE_heat_stocks_35_60}
\end{subfigure}
\hspace{.03\linewidth}
\begin{subfigure}[t]{.40\linewidth}
\centering
\includegraphics[width=\linewidth]{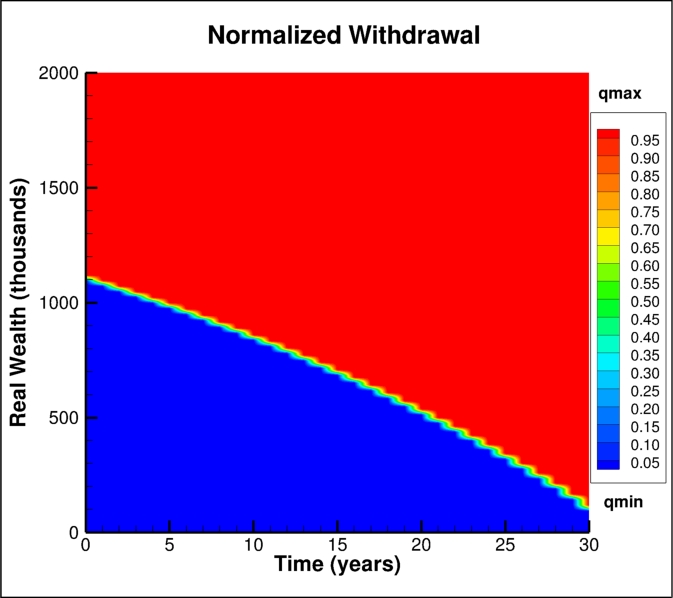}
\caption{Withdrawals, HJB Control}
\label{PIDE_heat_withdraw_35_60}
\end{subfigure}
}
\caption{
Heat map of controls: fraction in stocks and
withdrawals, computed from the problem (\ref{PCEE_a}).
 Note: problem setup described in Table \ref{base_case_1}. HJB solution performance computed on $2.56 \times 10^6$ observations of synthetic data. Parameters for synthetic data based on cap-weighted real CRSP, real 10 year treasuries (see Table \ref{fit_params}).
NN model trained on $2.56 \times 10^6$ observations of synthetic data.
$q_{min} = 35, q_{\max} = 60$, $\kappa = 1.0$. $W^* = 59.1$ for NN results. $W^* = 58.0$ for the HJB results.
$\epsilon = 10^{-6}$.
Normalized withdrawal $(q - q_{\min})/(q_{\max} - q_{\min})$.
Units: thousands of dollars.
\label{heat_map_fig}}
\end{figure}

\begin{figure}[h!]
\centering
\textbf{NN Control Results}
\centerline{%
\begin{subfigure}[t]{.33\linewidth}
\centering
\includegraphics[width=\linewidth]{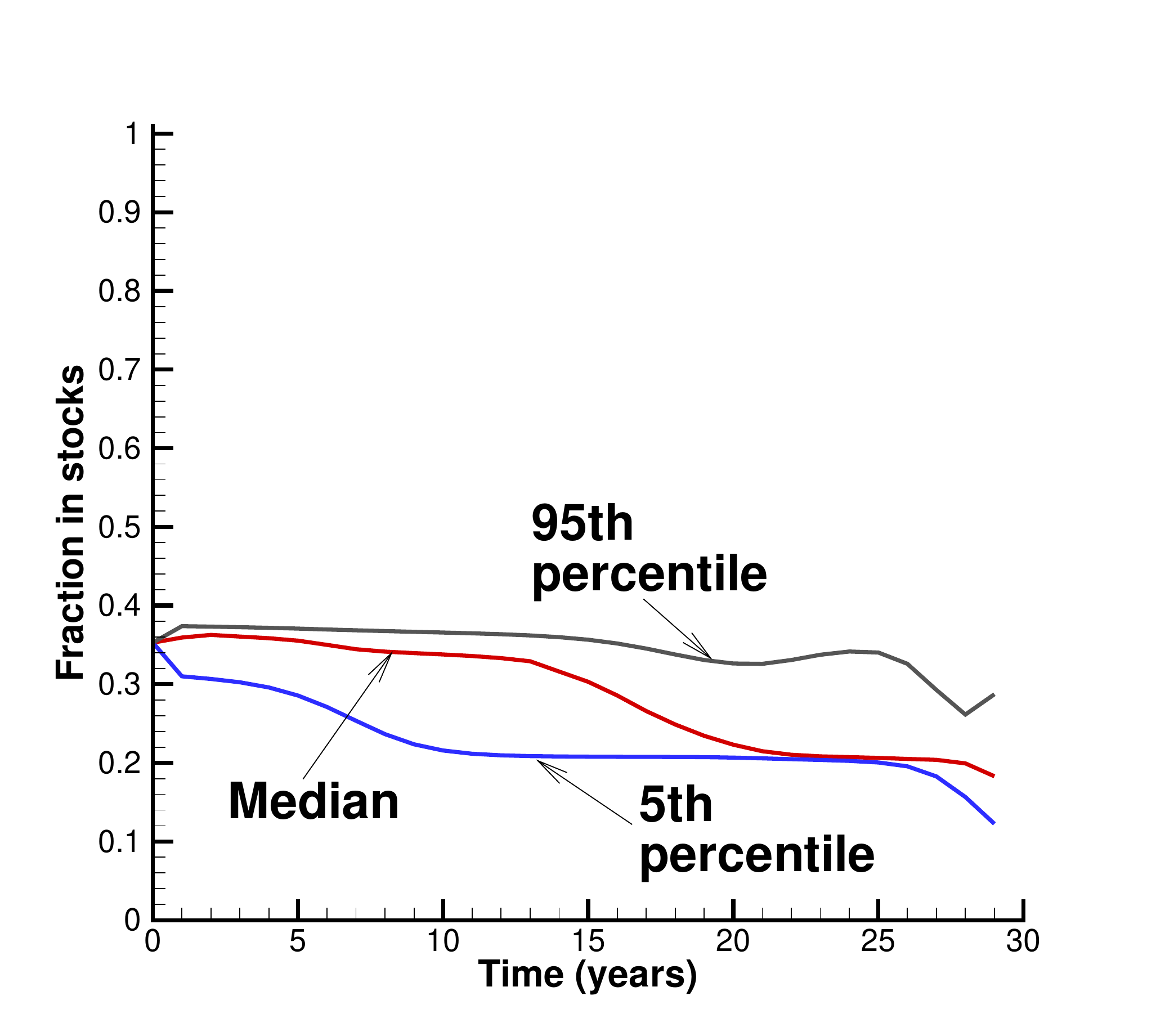}
\caption{Percentiles fraction in stocks, NN Control, $\epsilon = 10^{-6}$}
\label{percentile_stocks_35_60_NN}
\end{subfigure}
\begin{subfigure}[t]{.33\linewidth}
\centering
\includegraphics[width=\linewidth]{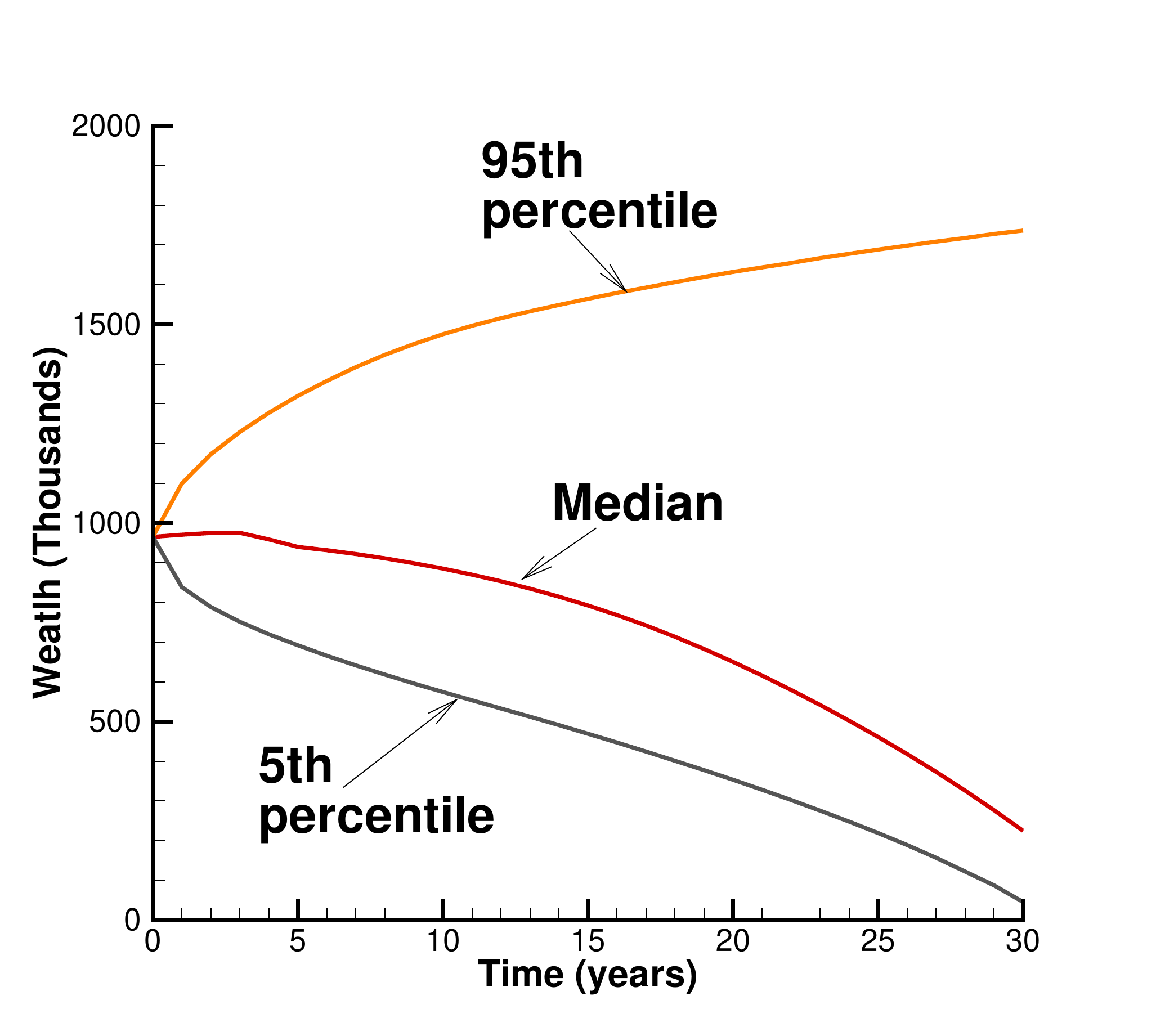}
\caption{Percentiles  wealth, NN Control, $\epsilon = 10^{-6}$}
\label{percentiles_wealth_35_60_NN}
\end{subfigure}
\begin{subfigure}[t]{.33\linewidth}
\centering
\includegraphics[width=\linewidth]{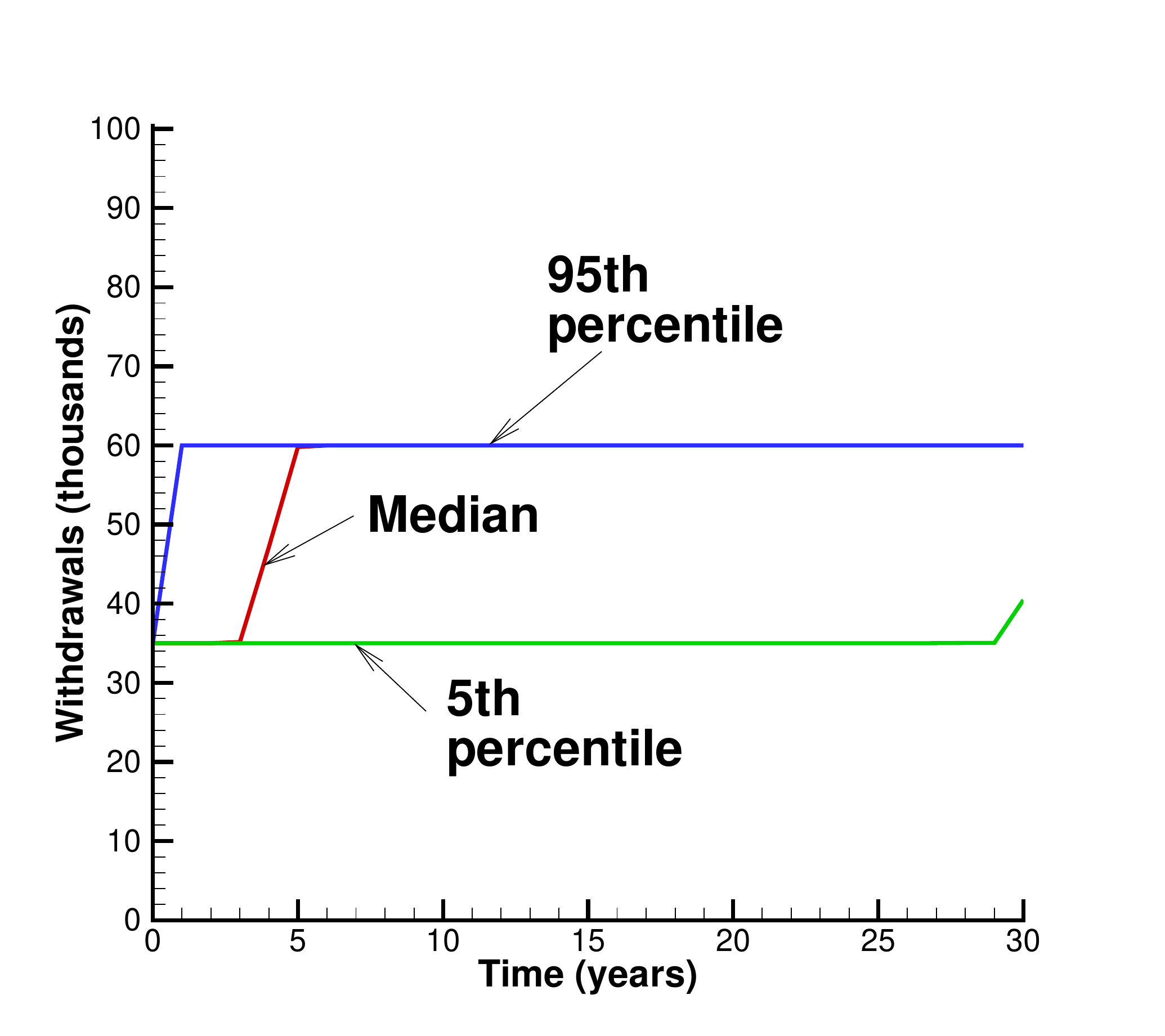}
\caption{Percentiles withdrawals, NN Control, $\epsilon = 10^{-6}$}
\label{percentiles_q_35_60_NN}
\end{subfigure}
}
\textbf{ }\par\medskip
\centering
\textbf{HJB Control Results (Positive and Negative Stabilization)}
\centerline{%
\begin{subfigure}[t]{.33\linewidth}
\centering
\includegraphics[width=\linewidth]{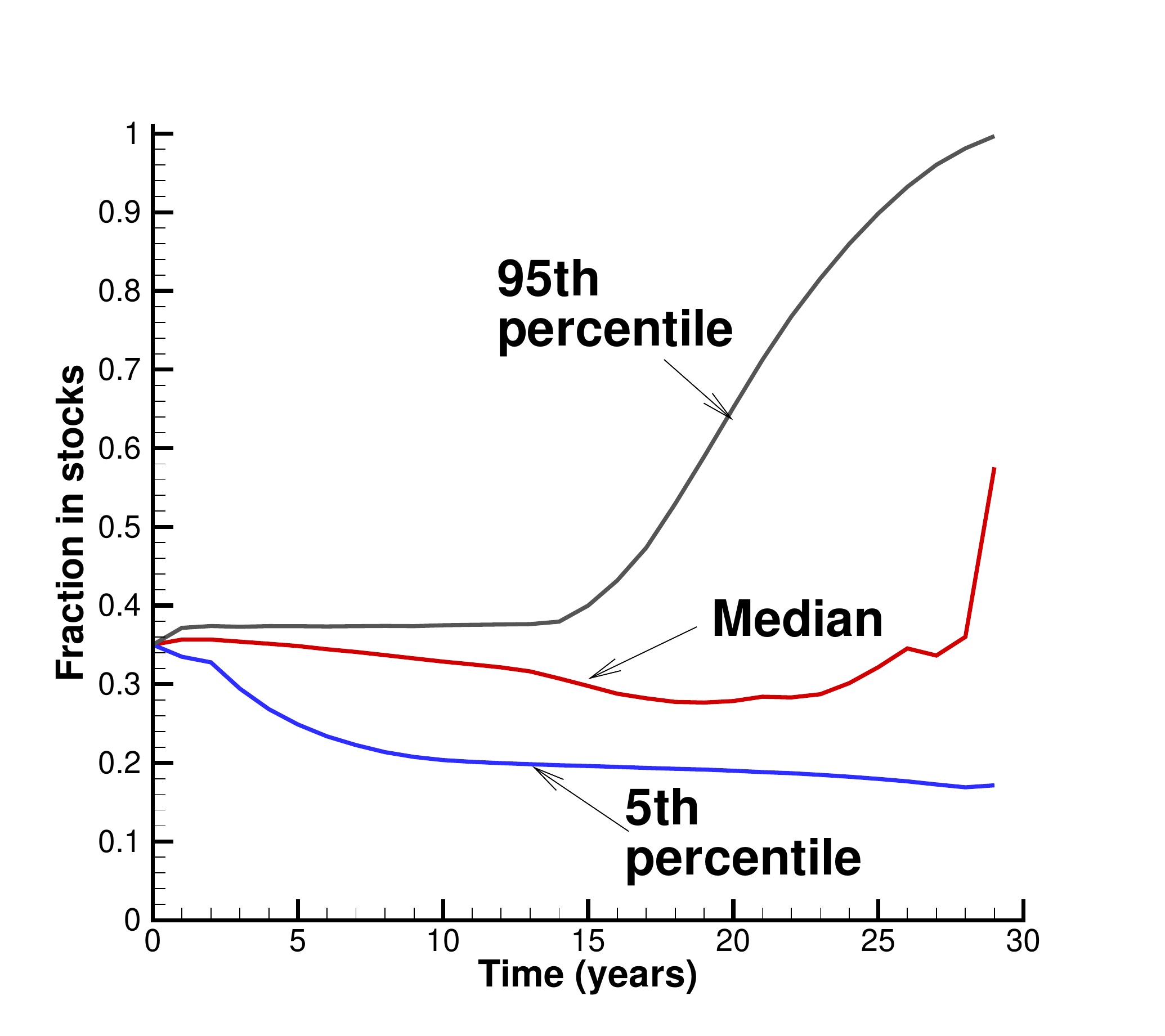}
\caption{Percentiles fraction in stocks, HJB Control, $\epsilon = 10^{-6}$}
\label{percentile_stocks_35_60_PIDE}
\end{subfigure}
\begin{subfigure}[t]{.33\linewidth}
\centering
\includegraphics[width=\linewidth]{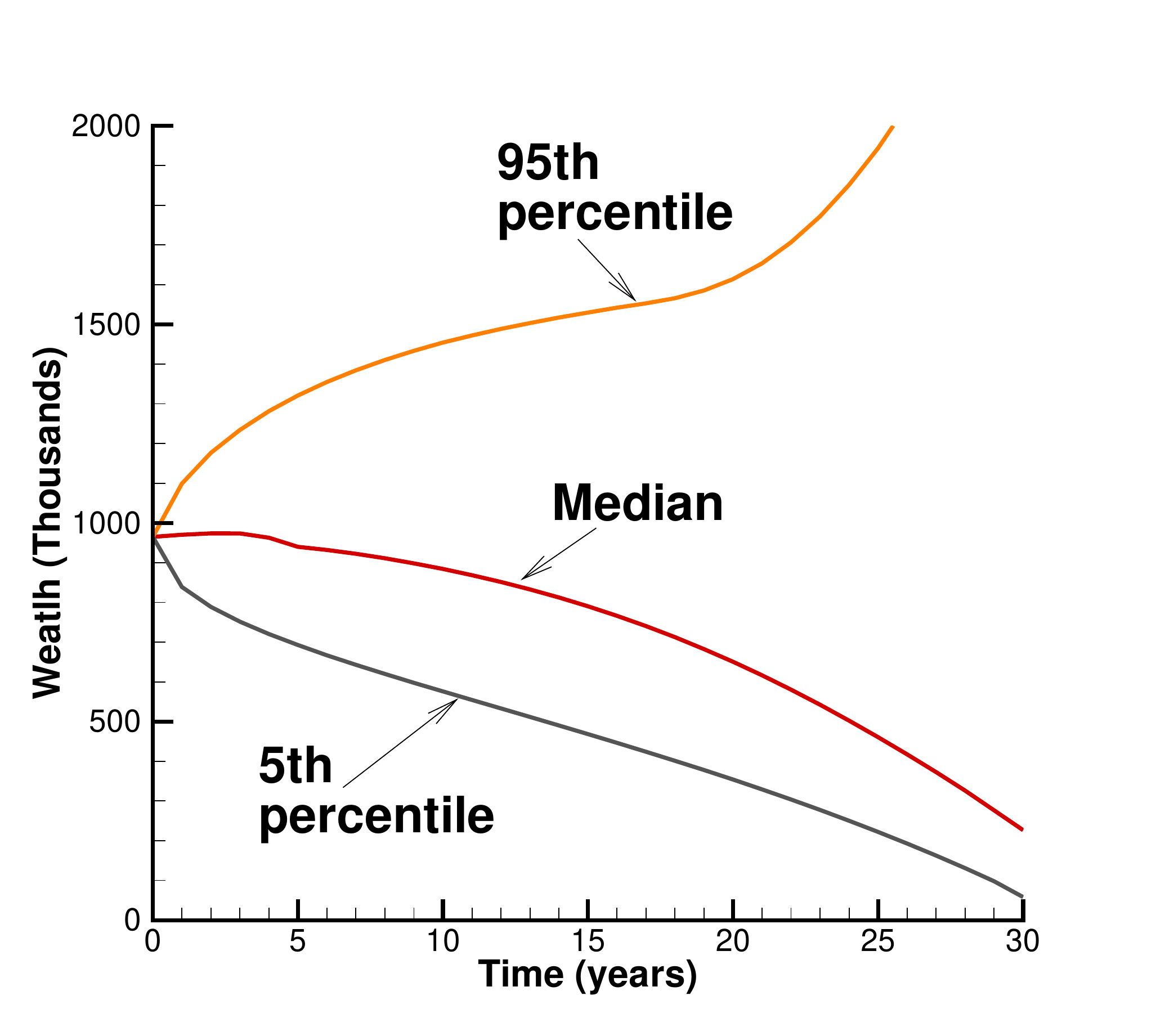}
\caption{Percentiles  wealth, HJB Control, $\epsilon = 10^{-6}$}
\label{percentiles_wealth_35_60_PIDE}
\end{subfigure}
\begin{subfigure}[t]{.33\linewidth}
\centering
\includegraphics[width=\linewidth]{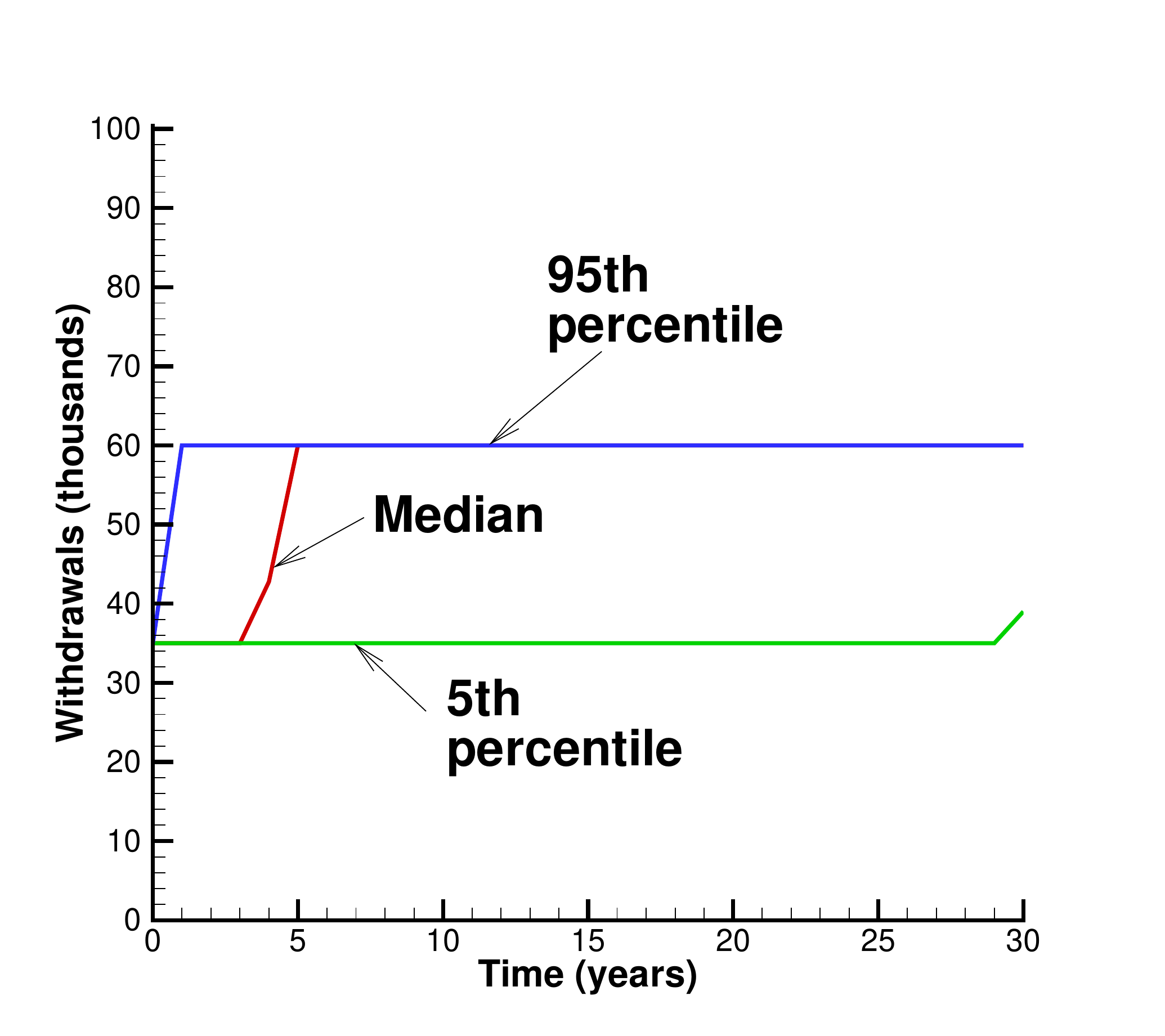}
\caption{Percentiles withdrawals, HJB Control, $\epsilon = 10^{-6}$}
\label{percentiles_q_35_60_PIDE}
\end{subfigure}
}
\centering
\centerline{%
\begin{subfigure}[t]{.33\linewidth}
\centering
\includegraphics[width=\linewidth]{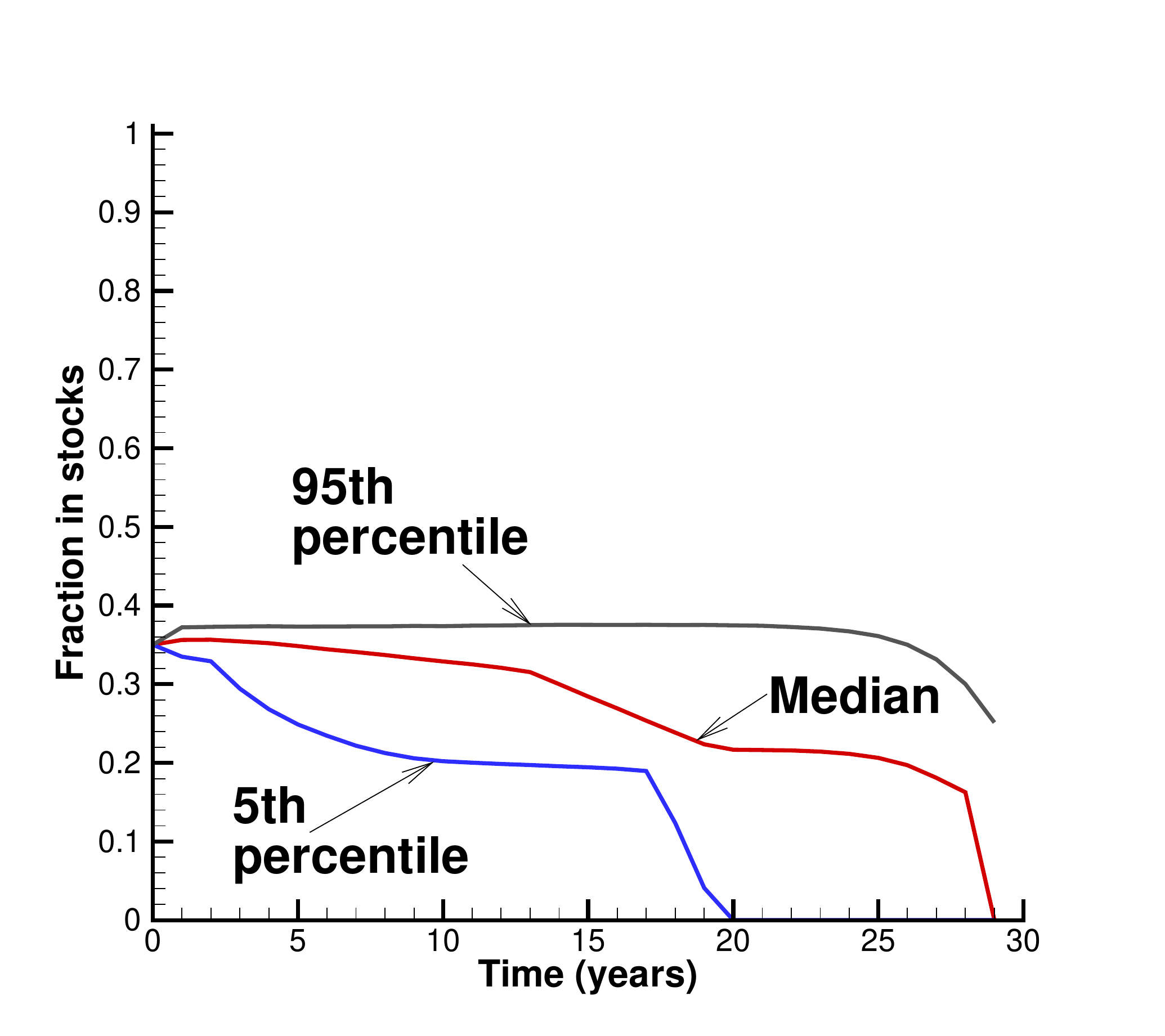}
\caption{Percentiles fraction in stocks, HJB Control, $\epsilon = -10^{-6}$}
\label{percentile_stocks_35_60_PIDE_negeps}
\end{subfigure}
\begin{subfigure}[t]{.33\linewidth}
\centering
\includegraphics[width=\linewidth]{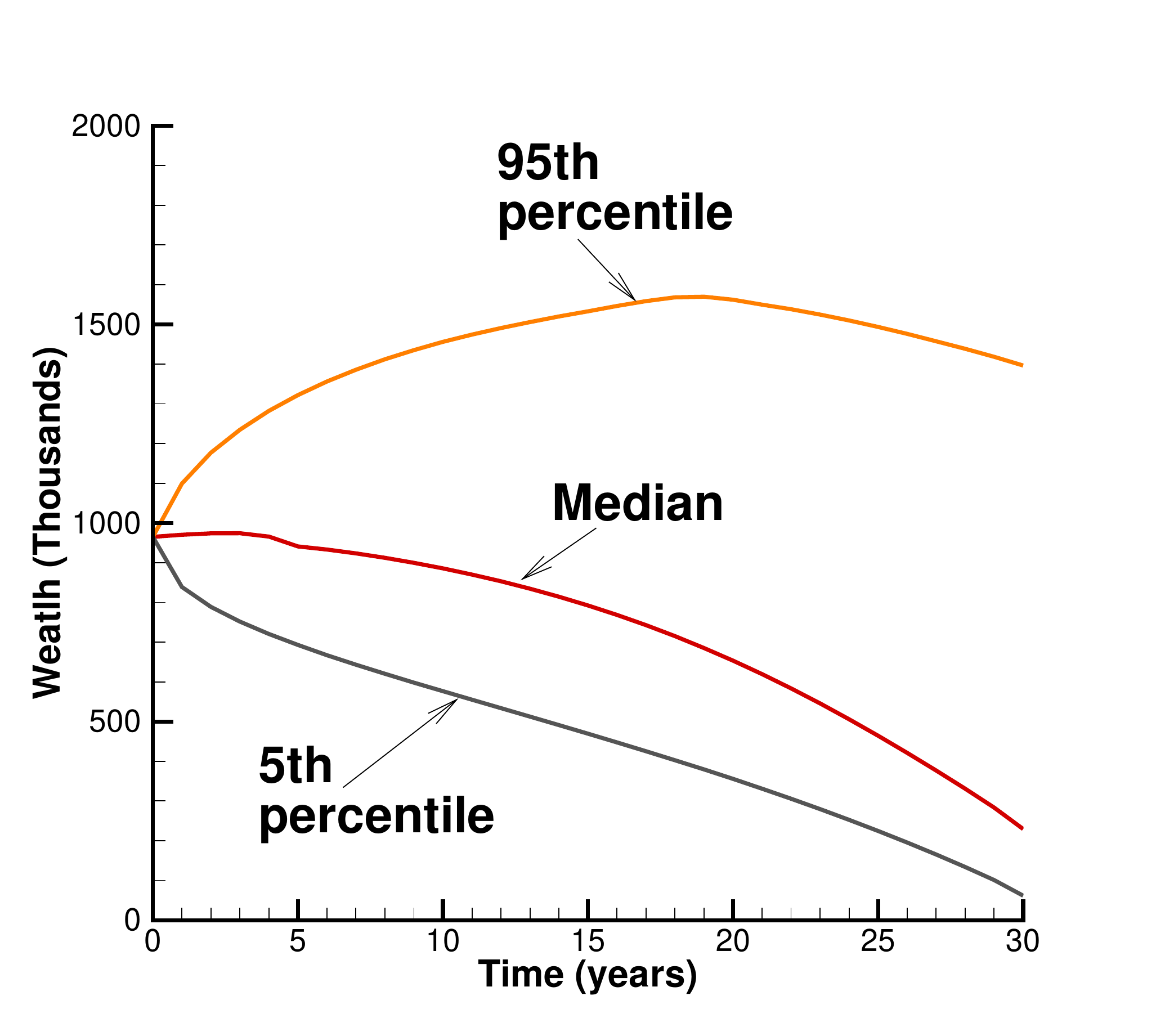}
\caption{Percentiles  wealth, HJB Control, $\epsilon = -10^{-6}$}
\label{percentiles_wealth_35_60_PIDE_negeps}
\end{subfigure}
\begin{subfigure}[t]{.33\linewidth}
\centering
\includegraphics[width=\linewidth]{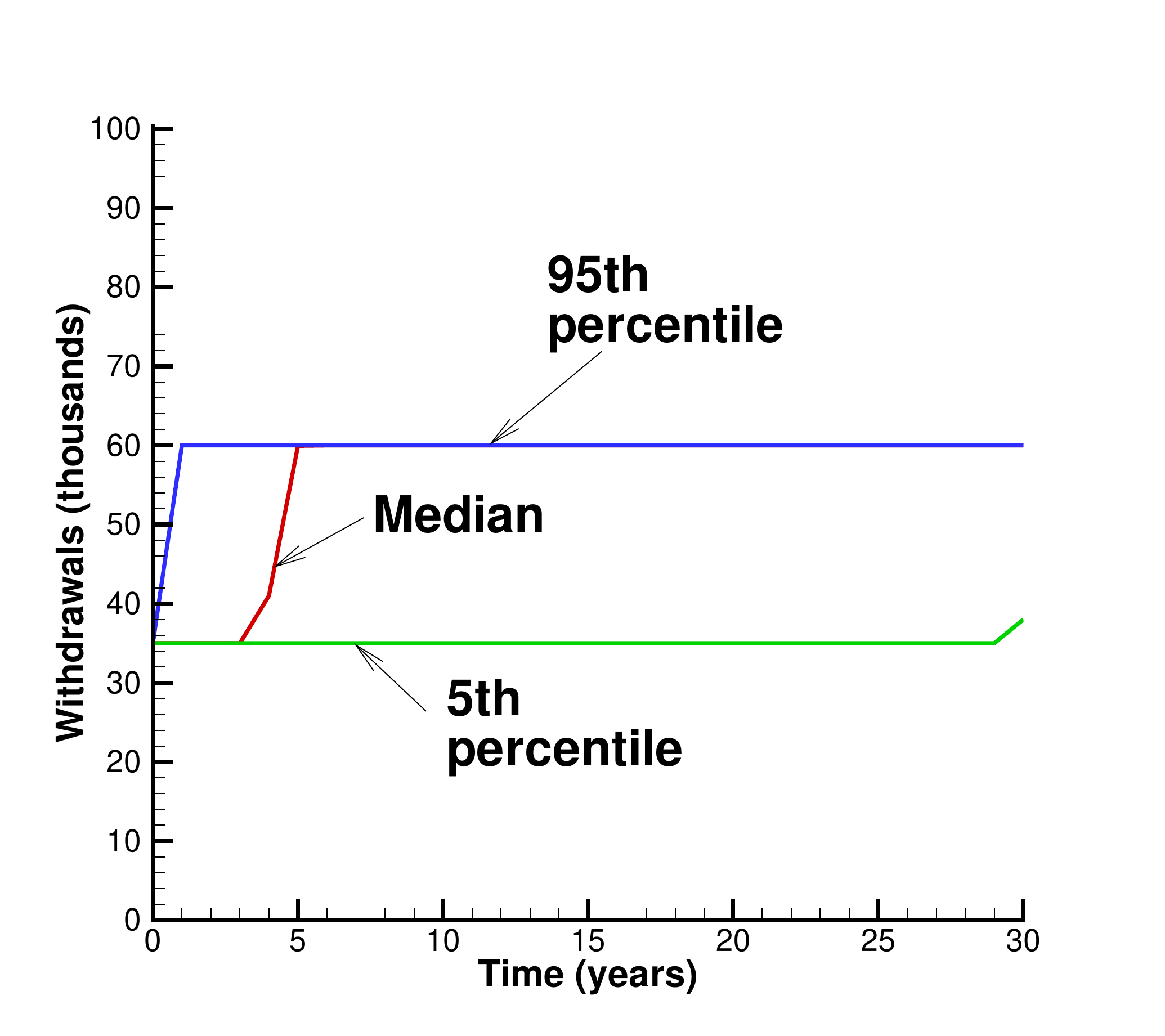}
\caption{Percentiles withdrawals, HJB Control, $\epsilon = -10^{-6}$}
\label{percentiles_q_35_60_PIDE_negeps}
\end{subfigure}
}

\caption{Scenario in Table \ref{base_case_1}.
NN and HJB controls computed from the problem (\ref{PCEE_a}).
Parameters based on the real CRSP index,
and real 10-year treasuries (see Table \ref{fit_params}).  
NN model trained on $2.56 \times 10^{5}$ observations of synthetic data.
HJB framework results from $2.56 \times 10^{6}$ observations of synthetic data.
$q_{min} = 35, q_{\max} = 60$, $\kappa = 1.0$. $W^* = 59.1$ for NN results. $W^* = 58.0$ for HJB results.
Units: thousands of dollars.
}
\label{percentiles_35_60}
\end{figure}

It is interesting to observe that the  proposed neural network framework  
is able to produce the {\em bang-bang} withdrawal control computed 
in \citet{forsyth:2022}, especially since we are using the continuous 
function $\hat{q}$ as an approximation.\footnote{Note that \citet{forsyth:2022} shows
that that in the continuous withdrawal limit, the withdrawal
control is bang-bang.  Our computed HJB results show that for discrete rebalancing,
the control appears to be bang-bang for all practical purposes.} A {\em bang-bang} control switches abruptly 
as shown here: the optimal strategy is to withdraw the minimum 
if the wealth is below a threshold, or else withdraw the maximum. 
As expected, the control threshold decreases as we move forward in time. 
We can see that the NN and HJB withdrawal controls behave very 
similarly at the 95th, 50th, and 5th percentiles of wealth 
(Figures \ref{percentiles_q_35_60_NN} and \ref{percentiles_q_35_60_PIDE}).  Essentially,
the optimal strategy withdraws at  either  $q_{\max}$ or $q_{\min}$, 
with a very small transition zone. This is in line with our expectations. 
By withdrawing less and investing more initially, 
the individual  decreases the chance of running out of savings.

We also note that the NN allocation control presents a small spread between the 5th and 95th percentile of the fraction in stocks (Figure \ref{percentile_stocks_35_60_NN}). In fact, the maximum stock allocation for the 95th percentile never 
exceeds 40\%, indicating that this is a stable low-risk strategy, which as we shall see, outperforms the 
\citet{Bengen1994} strategy.

\section{Model Robustness} \label{testing_section}
A common pitfall of neural networks is over-fitting to the training data. 
Neural networks that are over-fitted do not have the ability to generalize to previously unseen data. Since future asset 
return paths cannot be predicted, it is important to ascertain that the computed strategy is not overfitted to the training data and can perform well on unseen return paths. In this section, we demonstrate the robustness of the NN model's generated controls. 

We conduct three types of  robustness tests: (i) out-of-sample testing, 
(ii) out-of-distribution testing, and (iii) control sensitivity to training distribution.

\subsection{Out-of-sample testing}

Out-of-sample tests involve testing model performance on an unseen data 
set  sampled from the same distribution. In our case,  this means
training the NN on one set of  SDE paths sampled from the parametric model, and testing
on another set of paths generated using a different random seed.
We present the efficient frontier generated by computed 
controls on this new data set in Figure \ref{OOS_plot1}, 
which shows almost unchanged performance on the out-of-sample test set. 

\begin{figure}[htb!]
\centering
\includegraphics[width=0.4\linewidth]{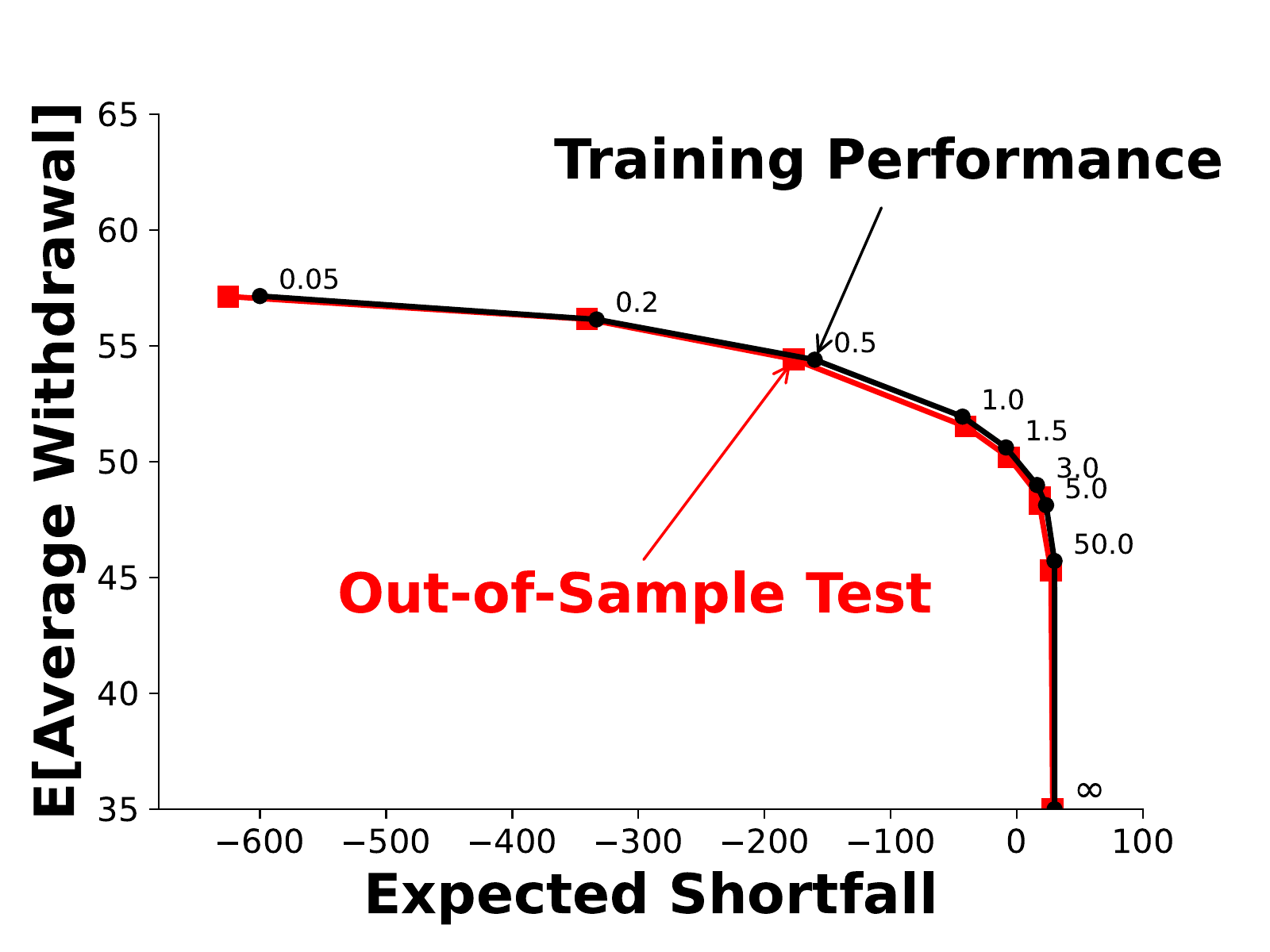}
\caption{Out-of-sample test. EW-ES frontiers, computed from the problem (\ref{PCEE_a}). Note: Scenario in Table \ref{base_case_1}. Comparison of NN training performance results vs. out-of-sample test. Both training and testing data are  $2.56 \times 10^5$ observations of synthetic data, generated with a different random seed. Parameters for synthetic data based on cap-weighted real CRSP, real 10 year treasuries (see Table \ref{fit_params}). 
$q_{min} = 35, q_{\max} = 60$.
$\epsilon = 10^{-6}$.
Units: thousands of dollars. Labels on nodes indicate $\kappa$ parameter values.}
\label{OOS_plot1}
\end{figure}

\subsection{Out-of-distribution testing}
    
Out-of-distribution testing involves evaluating the performance of 
the computed control on an entirely new data set sampled from a different distribution.
Specifically, test data is not generated from the parametric model used to produce training data, 
but is instead bootstrap resampled  from historical market returns via the method described in Section \ref{data_description_section}. 
We vary the expected block sizes to generate multiple testing data sets of $2.56 \times 10^5$ paths.

\begin{figure}
  \centering
  \includegraphics[width=0.4\linewidth]{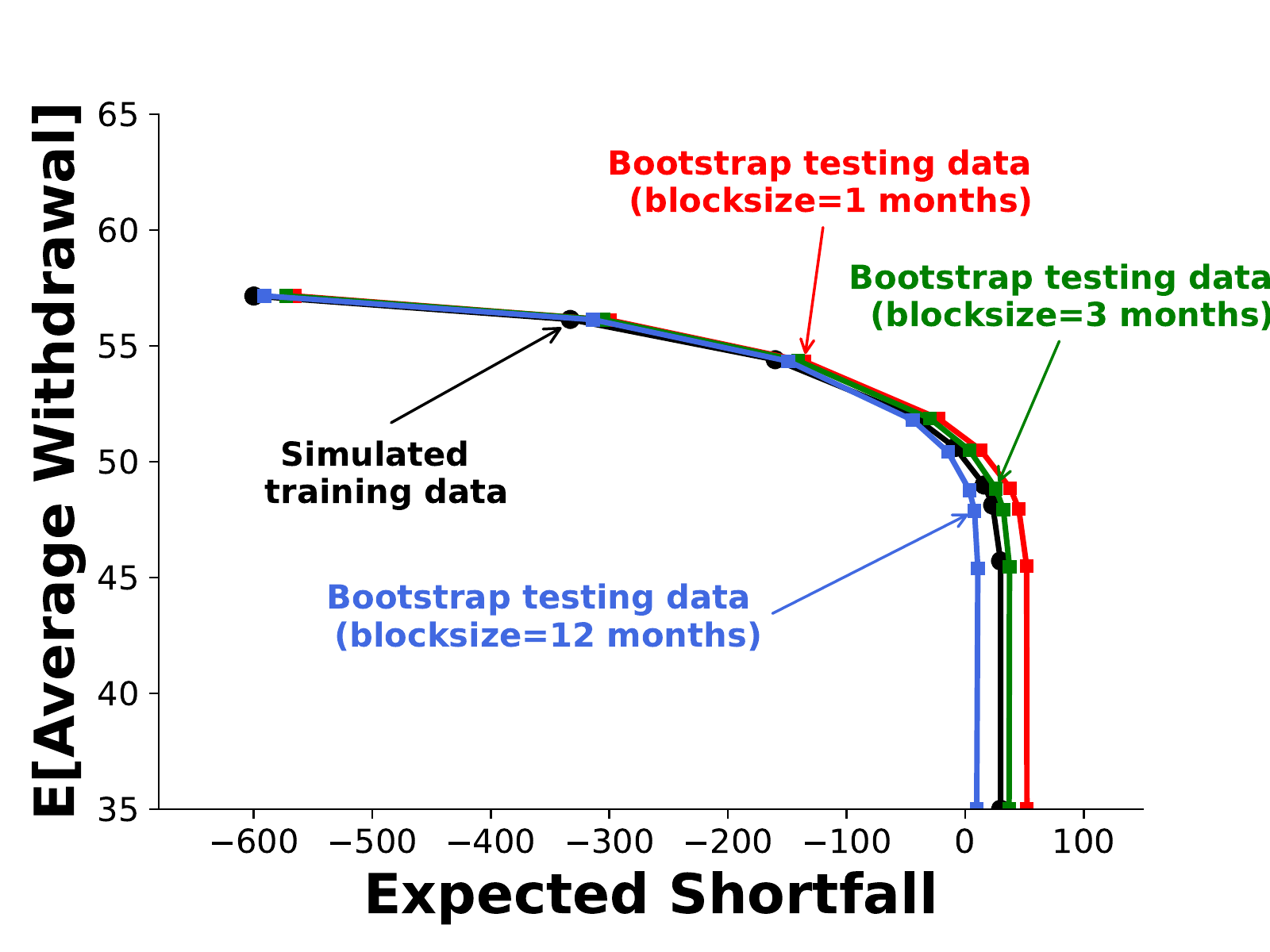}
  \caption{Out-of-distribution test. EW-ES frontiers of controls generated by NN model trained on $2.56 \times 10^{5}$ observations of synthetic data, tested on $2.56 \times 10^{5}$ observations of historical data with varying expected block sizes. Computed from  the problem (\ref{PCEE_a}). Note: Setup as in Table \ref{base_case_1}. Parameters based on real CRSP index and real 10-year U.S. Treasuries (see Table \ref{fit_params}). Historical data in range 1926:1-2019:12. Units: thousands of dollars. $q_{min} = 35; q_{max} = 60$. Simulated training data refers to Monte Carlo simulations
using the SDEs \eqref{jump_process_stock} and \eqref{jump_process_bond}.}
  \label{ood_test}
\end{figure}

In Figure \ref{ood_test}, 
we see that for each block size tested, the  efficient frontiers
are fairly close, indicating that the controls are relatively robust.
Note that the efficient frontiers for test performance in the historical
market with expected block size of 1 and 3 months plot slightly above the synthetic market frontier.  We conjecture that this
may be due to more pessimistic tail events
in the synthetic market.

The out-of-sample and out-of-distribution tests verify that the neural network is not over-fitting to the training data, and is generating an effective strategy, at least based on our block resampling data.

\subsection{Control sensitivity to training distribution} \label{control_sensitivity_subsection}

To  test the NN framework's adaptability to other training data sets, we train the NN framework on 
historical data (with expected block sizes of both 3 months and 12 months) 
and then test the resulting control on synthetic data. 
In Figure \ref{alt_training_test}, we compare the training performance and the test performance. 
The EW-ES frontiers for the test results on the synthetic data are very
close to the results on the bootstrap market data (training data set). 
This shows the NN framework's adaptability to use 
alternative data sets to learn, with the added advantage of not
being reliant on a parametric model, which is prone to miscalibration. 
Figure ~\ref{alt_training_test} also shows that, 
in all cases, in the synthetic or historical market, the EW-ES control significantly outperforms the 
Bengen \emph{4\% Rule} \footnote{The results for the Bengen strategy on the historical test data were computed with 
fixed  withdrawals of 4\% of initial
capital, adjusted for inflation.   We also
used a  constant allocation of 30\% in stocks for expected block size of 3 months, 
and 35\% in stocks for expected block size of 12 months. These were found to be the best 
performing constant allocations when paired with constant 4\% real withdrawals,
in terms of ES efficiency.} \citep{Bengen1994}. 
  
\begin{figure}[H]
\centering
    \begin{subfigure}[t]{.40\linewidth}
\centering
  \includegraphics[width=\linewidth]{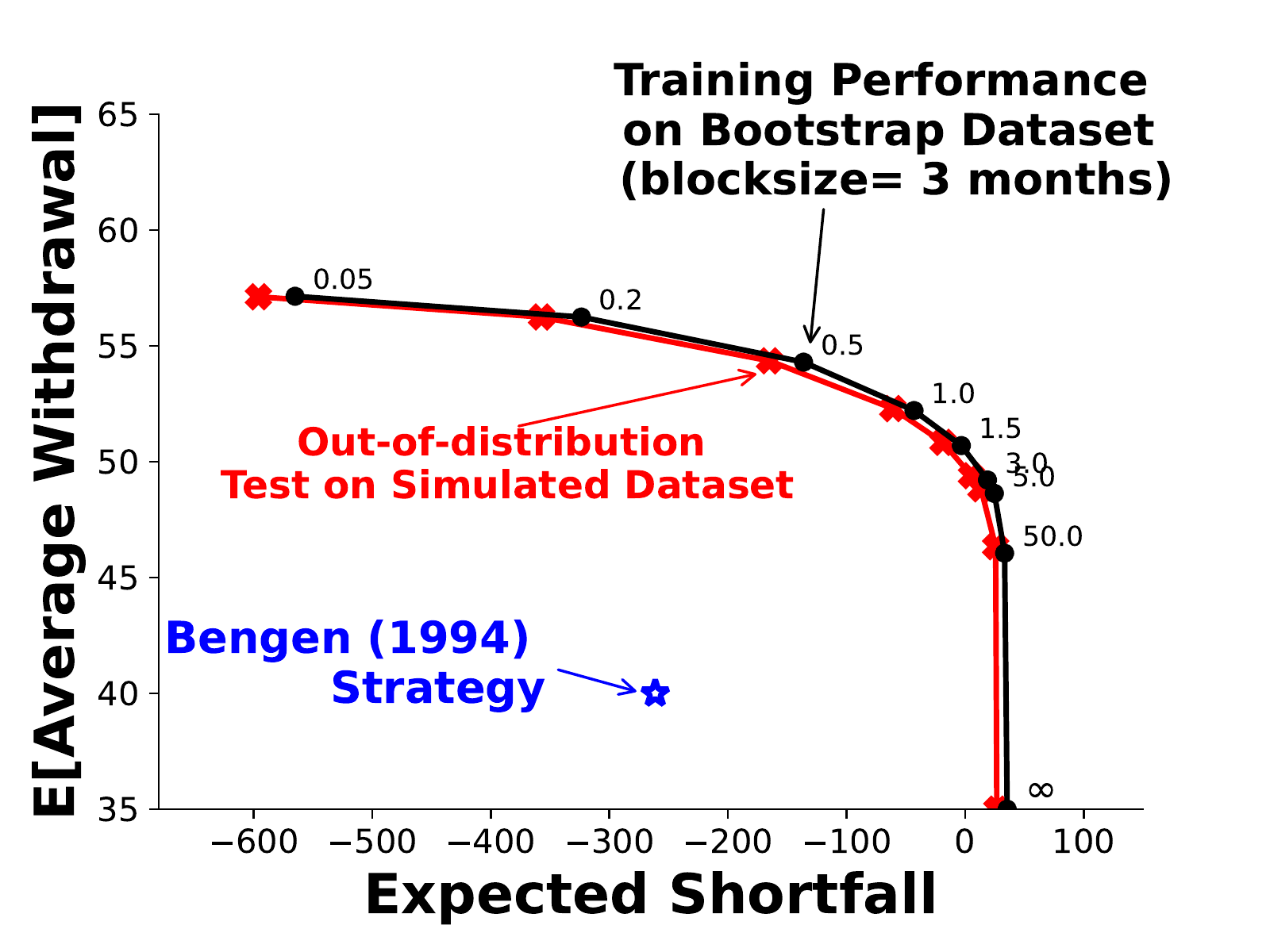}
  \caption{Historical training data, block size = 3 months}
  \end{subfigure}
  \hspace{.03\linewidth}
    \begin{subfigure}[t]{.40\linewidth}
  \centering
  \includegraphics[width=\linewidth]{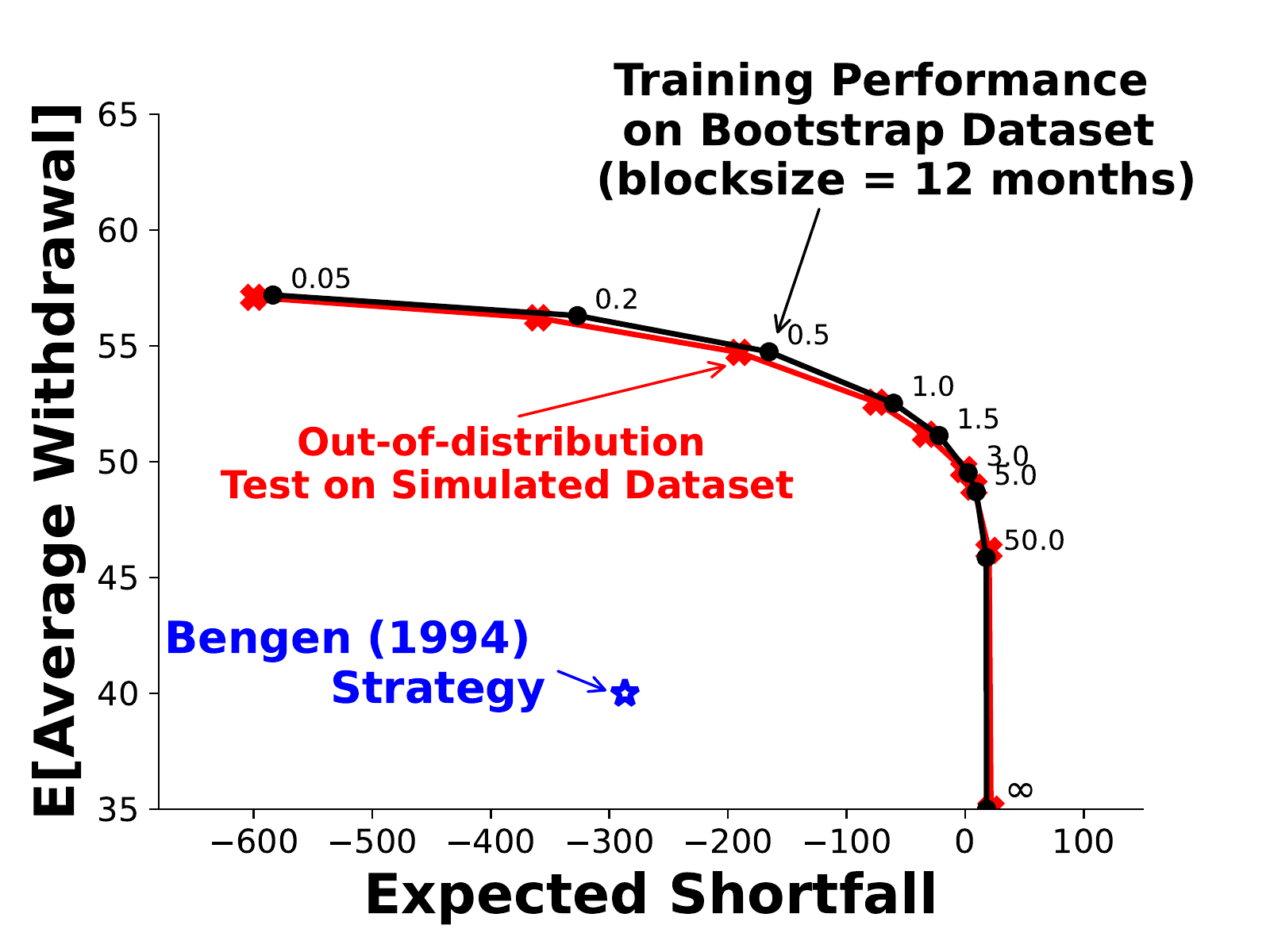}
  \caption{Historical training data, block size = 12 months}
  \end{subfigure}
  \caption{Training on historical data. EW-ES frontiers of controls generated by NN model trained on $2.56 \times 10^{5}$ observations of historical data with expected block sizes of a) 3 months and b) 12 months, 
each tested on $2.56 \times 10^{5}$ observations of synthetic data. 
Parameters based on real CRSP index and real 10-year U.S. Treasuries (see Table \ref{fit_params}). 
Historical data in range 1926:1-2019:12. Units: thousands of dollars. $q_{min} = 35; q_{max} = 60$.  The 
\citet{Bengen1994} results are based
on bootstrap resampling of the historical data.  Labels on nodes indicate $\kappa$ parameter values.
Simulated testing data refers to Monte Carlo simulations
using the SDEs \eqref{jump_process_stock} and \eqref{jump_process_bond}. $\epsilon = +10^{-6}$.
}
  \label{alt_training_test}
\end{figure}

\section{Conclusion}
In this paper, we proposed a { novel} neural network (NN)  architecture to efficiently and accurately
compute the  optimal decumulation strategy for retirees with DC pension plans. 
The stochastically constrained optimal control problem is solved based on  
a single standard unconstrained optimization, without using dynamic programming.

We began by highlighting the increasing prevalence of DC pension plans over traditional 
DB pension plans, and outlining the critical decumulation problem 
that faces DC plan investors.
There is an extensive literature  on devising
strategies for this problem. In particular, we examine a Hamilton-Jacobi-Bellman (HJB)
Partial Differential Equation (PDE)  based approach 
that can be shown to converge to an optimal solution for a dynamic withdrawal/allocation strategy. 
This provides an attractive balance of risk management and withdrawal efficiency for retirees. 
In this paper, we seek to build upon this approach by developing a new, 
more versatile framework using NNs to solve the decumulation problem. 

We conduct computational investigations  to demonstrate the accuracy and robustness of the proposed NN solution, 
utilizing the unique opportunity to compare NN solutions with the HJB results as a ground truth. 
Of particular noteworthiness is that the continuous function approximation from the NN framework is able 
to approximate a bang-bang control with high accuracy. 
We extend our experiments to establish the robustness of our approach, 
testing the NN control's performance on both synthetic and historical data sets. 

We demonstrate that the proposed NN framework produced solution accurately approximates  the ground truth solution. We also note the following advantages of the proposed NN framework:

\begin{enumerate} [label = (\roman*)]

    \item The NN method is data driven, and does not require postulating and calibrating a parametric
          model for market processes.

     \item The NN method directly estimates the low dimensional control by solving a single unconstrained optimization problem,
           avoiding the  problems associated with dynamic programming methods, which require estimating
           high dimensional conditional expectations (see \citet{beating_benchmark}). 

    \item The NN formulation maintains its simple structure (discussed in Section \ref{NN_framework}), immediately
       extendable to problems with more frequent rebalancing and/or withdrawal events. 
     In fact, the problem presented in \eqref{PCEE_a} requires each control NN to have only 
    two hidden layers for 30 rebalancing and withdrawal periods. 
    
\item The approximated control maintains continuity in time and/or space, provided it exists, 
   or otherwise provides a smooth approximation.  
   Continuity of the allocation control $p$ is an important practical consideration for any investment policy.

\end{enumerate}

Due to the ill-posedness of the stochastic optimal control problem in the region of 
high wealth near the end of the decumulation
 horizon, we observe that the NN allocation can appear to be very different from the HJB PDE solution. 
We note, however, that both strategies yield indistinguishable performance 
when assessed with the expected withdrawal and ES  reward-risk criteria.  In other words, these differences
hardly affect the objective function value, a weighted reward and risk value.  In the region of 
high wealth level near the end of the time horizon,
 the retiree is free to choose whether to invest
100\% in stocks or 100\% in bonds, since this has a negligible
effect on the objective function value (or reward-risk consideration).\footnote{This can be termed the {\em Warren Buffet}
effect.  Buffet is the fifth richest human being in the world.  He is 92 years old.
Buffet can choose any allocation strategy, and will never run out of cash.}

To conclude, the advantages of the NN framework make it a more versatile method,
compared to the solution of the HJB PDE.   We expect that the NN
approach  can handle problems of higher complexity,  e.g., involving a higher number of assets. 
In addition, the NN method can be applied to other proposed formulations for the retirement planning problem
(for example, see \citet{forsyth2022tontine}). We leave the extension of this methodology to future work.

\section{Acknowledgements}
Forsyth's work was supported by the Natural Sciences and Engineering Research Council of
Canada (NSERC) grant RGPIN-2017-03760.  Li's work was supported by the Natural Sciences and Engineering Research Council of
Canada (NSERC) grant RGPIN-2020-04331. The author's are grateful to P. van Staden for supplying the initial software library for
NN control problems.

\section{Conflicts of interest}
The authors have no conflicts of interest to report.


\appendix
\section*{Appendix}

\section{Induced Time Consistent Policy} \label{time_consistent_appendix}
In this section of the appendix, we review the concept of time consistency and relate its relevance to the $PCEE_{t_0}(\kappa)$ problem, (\ref{PCEE_a}).

Consider the optimal control $\mathcal{P}^*$ for problem (\ref{PCEE_a}), 

\begin{eqnarray} \label{opt_P_t0}
    (\mathcal{P}^*)^{t_0}(X(t_i^-), t_i)  ~;~ i=0, \ldots, M ~.
\end{eqnarray}
Equation (\ref{opt_P_t0}) can be interpreted as the optimal control for any time $t_i \geq t_0$, as a function of the state variables $X(t)$, as computed at $t_0$. 

Now consider if we were to solve the problem (\ref{PCEE_a}) starting at a later time $t_k, k > 0$. This optimal control starting at $t_k$ is denoted by:
\begin{eqnarray} 
    (\mathcal{P}^*)^{t_k}(X(t_i^-), t_i)  ~;~ i=k, \ldots, M \} ~.
\end{eqnarray}
In general, the solution of (\ref{PCEE_a}) computed at $t_k$ is not equivalent to the solution computed $t_0$:
\begin{eqnarray} 
    (\mathcal{P}^*)^{t_k}(X(t_i^-), t_i)  \neq  (\mathcal{P}^*)^{t_0}(X(t_i^-), t_i)  ~;~ i \ge k > 0.
\end{eqnarray}
This non-equivalence makes problem (\ref{PCEE_a}) \emph{time inconsistent}, implying that the investor will have the incentive to deviate from the control computed at time $t_0$ at later times. This type
of control is considered a {\em pre-commitment} 
control since the investor would need to commit to following the strategy at all times following $t_0$. 
Some authors describe pre-commitment controls as non-implementable because of the incentive to deviate.

In our case, however, the pre-commitment control from (\ref{PCEE_a}) can be shown to be identical to the time consistent control for an alternative version of the objective function. By holding $W^*$ fixed at the 
optimal value (at time zero), 
we can define the time consistent equivalent problem (TCEQ). 
Noting that the inner supremum in (\ref{PCEE_a}) is a continuous 
function of $W^*$, we define the optimal value of $W^*$ as 

\begin{eqnarray}
\mathcal{W}^*(s,b)  & = & \displaystyle \argmax_{W^*} \biggl\{
                      \sup_{\mathcal{P}_{0}\in\mathcal{A}} 
            \Biggl\{
               E_{\mathcal{P}_{0}}^{X_0^-,t_{0}^-}
           \Biggl[ ~\sum_{i=0}^{M} q_i ~  + ~
              \kappa \biggl( W^* + \frac{1}{\alpha} \min (W_T -W^*, 0) \biggr)
                \bigg\vert X(t_0^-) = (s,b)
                   ~\Biggr] \Biggr\} ~.
             \nonumber \\
            \label{pcee_argmax}
\end{eqnarray}
With a given initial wealth of $W_0^-$, this gives the following result from \citet{forsyth_2019_c}:
\begin{proposition}[Pre-commitment strategy equivalence to a time consistent
policy for an alternative objective function]

    The pre-commitment EW-ES strategy found by solving $J\left(s,b,  t_{0}^-\right)$ from (\ref{PCEE_a}), with fixed $W^* = \mathcal{W}^*$ from Equation \ref{pcee_argmax}, is identical to the time consistent strategy for the equivalent problem TCEQ (which has fixed $\mathcal{W}^*(0,W_0^-)$), with the following value function:

    \begin{eqnarray}
 \left(\mathit{TCEQ_{t_n}}\left(\kappa / \alpha  \right)\right):
     \nonumber \\
    \tilde{J}\left(s,b,  t_{n}^-\right)  
    & = &
            \sup_{\mathcal{P}_{n}\in\mathcal{A}}
        \Biggl\{
               E_{\mathcal{P}_{n}}^{X_n^-,t_{n}^-}
           \Biggl[ ~\sum_{i=n}^{M} q_i ~  + ~
               \frac{\kappa}{\alpha} \min (W_T -\mathcal{W}^*(0, W_0^-),0) 
                    \Biggr. \Biggr. 
                \bigg\vert X(t_n^-) = (s,b)
                   ~\Biggr] \Biggr\}.
                   \nonumber \\
                   \label{timec_equiv}
\end{eqnarray}
    
\end{proposition}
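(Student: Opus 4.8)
The plan is to exploit the fact that, once $W^*$ is frozen at the value $\mathcal{W}^* := \mathcal{W}^*(0,W_0^-)$ from (\ref{pcee_argmax}), the quantity $\kappa W^*$ appearing inside the expectation in (\ref{PCEE_a}) is an additive constant, independent of the admissible control $\mathcal{P}_0$. Hence for this fixed value one has
\begin{multline*}
\argmax_{\mathcal{P}_0\in\mathcal{A}} E_{\mathcal{P}_0}^{X_0^-,t_0^-}\biggl[\,\sum_{i=0}^M q_i + \kappa\Bigl(W^* + \tfrac1\alpha\min(W_T-W^*,0)\Bigr)\,\biggr] \\
= \argmax_{\mathcal{P}_0\in\mathcal{A}} E_{\mathcal{P}_0}^{X_0^-,t_0^-}\biggl[\,\sum_{i=0}^M q_i + \tfrac{\kappa}{\alpha}\min(W_T-W^*,0)\,\biggr],
\end{multline*}
and the two optimized objectives differ only by the constant $\kappa W^*$. (If the $\epsilon W_T$ stabilization term is retained in (\ref{PCEE_a}), it simply rides along as a further additive terminal term depending only on $W_T$ and plays no role in what follows.) By the very definition of $\mathcal{W}^*$ as the $\argmax$ over $W^*$ of the $\mathcal{P}_0$-optimized objective, the left-hand argmax above is exactly the control attaining $J(s,b,t_0^-)$ in (\ref{PCEE_a}); therefore it also attains $\tilde{J}(s,b,t_0^-)$, i.e.\ it solves $\mathit{TCEQ_{t_0}}(\kappa/\alpha)$.

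Next I would verify that the family $\{\mathit{TCEQ_{t_n}}(\kappa/\alpha)\}_{0\le n\le M}$ defined by (\ref{timec_equiv}) is itself time consistent, so that the time-$t_0$ optimal control, restricted to $[t_n,T]$, agrees with the optimal control of $\mathit{TCEQ_{t_n}}(\kappa/\alpha)$. Here $\mathcal{W}^*$ enters (\ref{timec_equiv}) only as a fixed constant --- there is \emph{no} further supremum over an auxiliary variable at later times --- so nothing obstructs a Bellman recursion. Concretely: between rebalancing dates there are no cash flows, so the tower property (as in (\ref{expanded_6})) propagates $\tilde{J}$; at $t_n$ the running reward contributes only the additive term $q_n$; and the terminal reward $\tfrac{\kappa}{\alpha}\min(W_T-\mathcal{W}^*,0)$ depends on the controlled process solely through $W_T$. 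Conditioning on $X(t_n^-)$ and splitting $\sup_{\mathcal{P}_n}$ into $\sup_{q_n}\sup_{p_n}$ over $\mathcal{Z}(W_n^-,W_n^+,t_n)$ then yields
\[
\tilde{J}(s,b,t_n^-) = \sup_{(q_n,p_n)\in\mathcal{Z}(W_n^-,W_n^+,t_n)}\Bigl\{\, q_n + E\bigl[\,\tilde{J}(X(t_{n+1}^-),t_{n+1}^-)\bigm|X(t_n^-)=(s,b)\,\bigr]\,\Bigr\},
\]
which is exactly the dynamic programming principle for $\mathit{TCEQ}$; time consistency of its optimal control follows. Combining the two steps, the pre-commitment EW-ES control with $W^*=\mathcal{W}^*$ equals the $\mathit{TCEQ_{t_0}}(\kappa/\alpha)$ optimal control, which by time consistency coincides with the $\mathit{TCEQ_{t_n}}(\kappa/\alpha)$ optimal control for every $n$ --- the assertion of the proposition. (This result is stated in \citet{forsyth_2019_c}, whose argument follows the same lines.)

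The step I expect to be the main obstacle is making the first comparison airtight at the level of \emph{strategies} rather than values: one must ensure the maximizers are well defined --- existence follows from the stated continuity of the inner supremum in $W^*$ together with the compactness of the control sets built into (\ref{Z_q_def})--(\ref{Z_def}) --- and, more subtly, that optimizing $(\mathcal{P}_0,W^*)$ jointly and then freezing $W^*=\mathcal{W}^*$ produces the same control as first freezing $W^*=\mathcal{W}^*$ and then optimizing $\mathcal{P}_0$. This is where the pre-commitment nature genuinely enters: the equivalence holds precisely because $\mathcal{W}^*$ is by construction the $W^*$ that is jointly optimal \emph{at $t_0$}, whereas for any other constant --- e.g.\ the value of $W^*$ that would be jointly optimal were the problem restarted at some $t_k>0$ --- it would fail, which is exactly why the resulting strategy is only pre-commitment optimal for $\mathit{PCEE_{t_0}}(\kappa)$ yet genuinely time consistent for the modified objective $\mathit{TCEQ}$.
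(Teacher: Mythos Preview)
Your argument is correct and is precisely the standard one: freeze $W^*=\mathcal{W}^*$, observe that $\kappa W^*$ is then an additive constant so the $t_0$-maximizer over $\mathcal{P}_0$ is unchanged, and then note that the resulting target-shortfall objective has the additive-in-time structure (running rewards $q_i$, terminal reward $\tfrac{\kappa}{\alpha}\min(W_T-\mathcal{W}^*,0)$) needed for a Bellman recursion, hence is time consistent. The paper does not spell this out; its proof consists only of a pointer to \citet{forsyth_2019_c}, Proposition~6.2, whose argument is exactly the one you have reconstructed. Your additional remarks on existence of maximizers and on why the equivalence is tied to the particular choice $W^*=\mathcal{W}^*(0,W_0^-)$ are accurate and go slightly beyond what the paper states.
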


\begin{proof}
    This follows similar steps as in \citet{forsyth_2019_c}, proof of Proposition (6.2). 
\end{proof}

With fixed $W^*$, $\mathit{TCEQ_{t_n}}(\kappa / \alpha)$ 
is based on a target-based shortfall as its measure of risk, 
which is trivially time consistent. 
$W^*$ has the convenient interpretation of a disaster level of final wealth,
as specified at time zero.
Since the optimal controls for $PCEE_{t_0}(\kappa)$ and $\mathit{TCEQ_{t_n}}(\kappa / \alpha)$ 
are identical, we regard $\mathit{TCEQ_{t_n}}(\kappa / \alpha)$ as the EW-ES induced 
time consistent strategy \citep{Strub_2019_a}, which is implementable since 
the investor will have no incentive to deviate from a strategy computed at $t_0$ at later times. 

For further discussion concerning the relationship between pre-commitment, time consistent, and induced
time consistent strategies, we refer the reader to \citet{Bjork2010,Bjork2014,vigna:2014,Vigna2017,
Strub_2019_a,forsyth_2019_c,bjork_book_2021}.

\section{PIDE Between Rebalancing Times}\label{PIDE_all}
Applying Ito's Lemma for jump processes \citep{Cont_book}, using 
Equations (\ref{jump_process_stock}) and (\ref{jump_process_bond}) in Equation (\ref{expanded_6}) gives
\begin{eqnarray}
  & & V_t +  \frac{ (\sigma^s)^2 s^2}{2} V_{ss} +( \mu^s - \lambda_{\xi}^s \gamma_{\xi}^s) s V_s
       + \lambda_{\xi}^s \int_{-\infty}^{+\infty} V( e^ys, b, t) f^s(y)~dy 
      + \frac{ (\sigma^b)^2 b^2}{2} V_{bb} 
                  \nonumber \\
     & & ~~~     + ( \mu^b + \mu_c^b {\bf{1}}_{ \{ b < 0 \} } - \lambda_{\xi}^b \gamma_{\xi}^b) b V_b       
                  + \lambda_{\xi}^b \int_{-\infty}^{+\infty} V( s, e^yb, t) f^b(y)~dy 
       -( \lambda_{\xi}^s + \lambda_{\xi}^b )  V + \rho_{sb} \sigma^s \sigma^b s b V_{sb} 
       =0  ~,  \nonumber \\
  & & ~~~~~~~~~~~~~~~~~~~~~~~~~~~~~~~s \ge 0 ~.
      \label{expanded_7}
\end{eqnarray}
where the density functions $f^s(y), f^b(y)$ are as given in equation (\ref{eq:dist_stock}).

\section{Computational Details: Hamilton-Jacobi-Bellman (HJB) PDE  Framework} \label{appendix_PIDE}

For a detailed description of the numerical algorithm used to solve the HJB equation framework described in Section \ref{algo_section}, we refer the reader to \citet{forsyth:2022}. We summarize the method here. 

First, we solve the auxiliary problem (\ref{expanded_1}), with
fixed values of $W^*$, $\kappa$ and $\alpha$. The state space in $s > 0$ and $b >0$ is discretized using evenly spaced nodes in log space to create a grid to represent cases. A separate grid is created in a similar fashion to represent cases where wealth is negative. The Fourier methods discussed in \citet{forsythlabahn2017} are used to solve the PIDE representing market dynamics between rebalancing times. Both controls for withdrawal and allocation are discretized using equally spaced grids. The optimization problem (\ref{expanded_3}) is solved first for the allocation control by exhaustive search, storing the optimal for each discretized wealth node. The withdrawal control in (\ref{q_opt}) can then be solved in a similar fashion, using the previously stored allocation control to evaluate the right-hand side of (\ref{q_opt}). Linear interpolation is used where necessary. The stored controls are used to advance the solution in (\ref{expanded_5}). 

Since the numerical method just described assumes a constant $W^*$, an outer optimization step to find the optimal $W^*$ (candidate Value-at-Risk) is necessary. Given an approximate solution to (\ref{expanded_1}) at $t=0$, the full solution to 
$PCEE_{t_0}(\kappa)$ (\ref{PCEE_a}) is determined using Equation (\ref{expanded_equiv}). A coarse grid is used at first for an exhaustive search. This is then used as the starting point for a one-dimensional optimization algorithm on finer grids.

\section{Computational Details:  NN Framework} \label{appendix_nn}

\subsection{NN Optimization}
The NN framework, as described in Section \ref{NN_formulation_section} and illustrated in Figure \ref{NN_diagram}, was implemented using the PyTorch library \citep{NEURIPS2019_9015}. The withdrawal network $\hat{q}$, and allocation network $\hat{p}$ were both implemented with 2 hidden layers of 10 nodes each, with biases. Stochastic Gradient Descent \citep{ruder2016overview} was used in conjunction with the Adaptive Momentum optimization algorithm to train the NN framework \citep{kingma2017adam}. The NN parameters and auxiliary training parameter $W^*$ were trained with different initial learning rates. The same decay parameters and learning rate schedule were used. Weight decay 
($\ell_2$ penalty) was also employed to make training more stable. The training loop utilizes the auto-differentiation capabilities of the PyTorch library. Hyper-parameters used for NN training in this paper's experiments are given in Table \ref{nn_hyperparameters}. 

The training loop tracks the minimum loss function value as training progresses and selects the model that had given the optimal loss function value based on the entire training dataset by the end of the specified number of training epochs.

\subsection{Transfer learning between different $\kappa$ points}

 For high values of $\kappa$,  the objective function is  weighted more  towards optimizing ES (lower risk). In these cases,  optimal controls  are more difficult to compute.  
This is because the ES measure used (CVAR) is only affected by the sample paths below the $5^{th}$ percentile of terminal wealth, which are quite sparse. To overcome these training difficulties, we employ transfer learning \citep{tan2018survey} to improve training for the more difficult points on the efficient frontier. We begin training the model for the lowest $\kappa$ from a random initialization (`cold-start'), and then initialize the models for each increasing $\kappa$ with the model for the previous $\kappa$. Through numerical experiments, we found this method made training far more stable and less likely to 
terminate in local minima for higher values of $\kappa$. 

\subsection{Running minimum tracking}
The training loop tracks the minimum loss function value as training progresses and selects the model that had given the optimal loss function value based on the entire training dataset by the end of the specified number of training epochs. 

\begin{table}[h!]
\begin{center}
{\small
\begin{tabular}{lc} \toprule
NN framework hyper-parameter          & Value \\ \midrule
  Hidden layers per network   &  2 \\
  \# of nodes per hidden layer &  10 \\
  Nodes have biases &  True \\
  \# of iterations (\#itn)  & 50,000 \\
  SGD mini-batch size & 1,000 \\
  \# of training paths & $2.56 \times 10^5$ \\
  Optimizer & Adaptive Momentum \\
  Initial Adam learning rate for $(\bm{\theta}_q, \bm{\theta}_p)$ & 0.05 \\
  Initial Adam learning rate for $W^*$ & 0.04 \\
  Adam learning rate decay schedule &
  $[0.70 \times\text{\#itn}, 0.97 \times \text{\#itn}]$, $\gamma = 0.20$ \\
  Adam $\beta_1$ & 0.9 \\
  Adam $\beta_2$ & 0.998 \\
  Adam weight decay ($\ell_2$ Penalty) & 0.0001 \\
  Transfer Learning between $\kappa$ points & True \\
  Take running minimum as result & True \\
\bottomrule
\end{tabular}
}
\end{center}
\caption{Hyper-parameters used in training the NN framework for numerical experiments presented in this paper.  
\label{nn_hyperparameters}
}
\end{table}

\subsection{Standardization}
To improve learning for the neural network, we normalize the input wealth using means and standard deviations  of wealth samples from a  reference strategy. We use the constant withdrawal and allocation strategy defined in \citet{forsyth:2022} as the reference strategy with $2.56 \times 10^5$ simulated paths. Let $W^b_t$ denote the wealth {vector} at time $t$ based on simulations. Then $\bar{W}^b_t$ and $\sigma(W^b_t)$ denote the associated average wealth and standard deviation. Then we normalize the feature input to the neural network in the following way:

$$
\Tilde{W_t} = \frac{W_t - \bar{W}^b_t}{\sigma(W^b_t)}
$$
For the purpose of training the neural network, the values $\bar{W}^b_t$ and $\sigma(W^b_t)$ are just  constants, and we can use any reasonable values. This input feature normalization is done  for both withdrawal and allocation NNs.

In Section \ref{testing_section}, we show in out-of-sample and out-of-distribution tests that $\bar{W}^b_t$ and $\sigma(W^b_t)$ do not need to be related to the testing data as long as these are reasonable values. In Section \ref{NN_formulation_section}, when referring to $W$ as part of the input to the NN functions $\hat{q}$ and $\hat{p}$, we use the standardized $\Tilde{W}$ for computation.

\section{Model Calibrated from Market Data} \label{appendix_market_model}
Table \ref{fit_params} shows the calibrated model parameters for processes \eqref{jump_process_stock} and \eqref{jump_process_bond},
from \citet{forsyth:2022} using market data described in \S \ref{data_description_section}.  

{\small

\begin{table}[h!]
\begin{center}
\text{Calibrated Model Parameters}\\
\medskip
\begin{tabular}{cccccccc} \toprule
 CRSP & $\mu^s$ & $\sigma^s$ & $\lambda^s$ & $u^s$ &
  $\eta_1^s$ & $\eta_2^s$ & $\rho_{sb}$ \\ \midrule
       & 0.0877  & 0.1459&  0.3191  &  0.2333 & 4.3608 & 5.504 & 0.04554\\
 \midrule
 \midrule
10-year Treasury & $\mu^b$ & $\sigma^b$ & $\lambda^b$ & $u^b$ &
  $\eta_1^b$ & $\eta_2^b$ & $\rho_{sb}$ \\ \midrule
        & 0.0239 & 0.0538 & 0.3830 &  0.6111 &  16.19 & 17.27 & 0.04554\\
\bottomrule
\end{tabular}
\caption{Estimated annualized parameters for double exponential jump
diffusion model.  Value-weighted CRSP index, 10-year US treasury index
deflated by the CPI.  Sample period 1926:1 to 2019:12. 
}
\label{fit_params}
\end{center}
\end{table}
}

\section{Optimal expected block sizes: bootstrap resampling}
Table \ref{auto_blocksize} shows our estimates of the optimal block size using
the algorithm in \citet{politis2004,politis2009} using market data described in \S \ref{data_description_section}.
\begin{table}[h!]
\begin{center}

\text{Optimal expected block size for bootstrap resampling historical data} \\
\medskip
{\small
\begin{tabular}{lc} \toprule
Data series          & Optimal expected \\
                     & block size $\hat{b}$ (months) \\ \midrule
  Real 10-year Treasury index    &  4.2 \\
  Real CRSP value-weighted index &  3.1 \\
\bottomrule
\end{tabular}
}
\end{center}
\caption{Optimal expected blocksize $\hat{b}=1/v$ when the blocksize follows
a geometric distribution $Pr(b = k) = (1-v)^{k-1} v$. The algorithm in
\citet{politis2009} is used to determine $\hat{b}$.
Historical data range 1926:1-2019:12.
\label{auto_blocksize}
}
\end{table}

\section{Convergence Test: HJB Equation}
Table \ref{conservative_accuracy} shows a detailed convergence test for a single
point on the (EW, ES) frontier, using the PIDE method.
The controls are computed using the HJB PDE, and stored.  The stored
controls are then used in Monte Carlo simulations, which are used to
verify the PDE solution, and also generate various statistics of interest.

\begin{table}[h!]
\begin{center}
\medskip
{\small
\begin{tabular}{lcccc|cc} \toprule
\multicolumn{5}{c|}{Algorithm in Section \ref{algo_section} } & \multicolumn{2}{c}{Monte Carlo} \\ \midrule
Grid &   ES (5\%) & $E[ \sum_i q_i]/(M+1)$  &  Value Function &  $W^*$  &  ES (5\%) & $E[ \sum_i q_i]/(M+1)$  \\
\midrule 
$512 \times 512$   &-51.302  & 52.056 & 1.562430e+3  &   50.10      &         -45.936 &    52.07\\
$1024 \times 1024 $ & -46.239 &  52.049 &  1.567299e+3 &   52.47      &         -45.102 &    52.05\\
$2048 \times 2048 $ & -42.594 &  51.976 & 1.568671e+3  &  58.00       &         -42.623 &    51.97\\
$4096 \times 4096 $ &  -40.879 &  51.932&  1.569025e+3 &  61.08      &         -41.250 &   51.93\\
\midrule
\bottomrule
\end{tabular}
}
\caption{HJB equation convergence test,
real stock index: deflated real capitalization weighted CRSP, real bond index: deflated
ten year treasuries.  Scenario in Table \ref{base_case_1}.
Parameters in Table \ref{fit_params}.
The Monte Carlo method used
$2.56 \times 10^6$ simulations.
$\kappa = 1.0, \alpha = .05$.
Grid refers to the grid used in the Algorithm in Section \ref{algo_section}: $n_x \times n_b$, where $n_x$ is the number of nodes in the $\log s$ direction, and $n_b$ is the number of nodes in the $\log b$ direction.
Units: thousands of dollars (real).
$(M+1)$ is the total number of withdrawals. $M$ is the number of rebalancing dates.
$q_{\min} = 35.0$. $q_{\max} = 60$.
Algorithm in Section \ref{algo_section}.
\label{conservative_accuracy}}
\end{center}
\end{table}

\section{Detailed efficient frontier comparisons}
Table \ref{PIDE_ef_details} shows the detailed efficient frontier, computed
using the HJB equation method, using the $2048 \times 2048$ grid.
Table \ref{nn_ef_details} shows the efficient frontier computed from the NN framework. This should
be compared to Table \ref{PIDE_ef_details}.
Table \ref{appendix_obj_comparison} compares the objective function
values, at various points on the efficient frontier, for the HJB and
NN frameworks.

\begin{table}[hbt!]
\begin{center}
\text{Detailed Efficient Frontier: HJB Framework}\\
\medskip
\begin{tabular}{cccc} \toprule
$\kappa$ & ES (5\%) & $E[ \sum_i q_i]/(M+1)$  & $Median[W_T]$      \\ \midrule
  0.05 & -596.00  &    57.14  &             124.36\\
 0.2   & -334.29  &     56.17 &              92.99\\
 0.5 &   -148.99  &      54.25    &            111.20\\
 1.0 &     -42.62 &       51.97   &            227.84\\
  1.5 & -8.05    &   50.63      &        298.20 \\
  3.0 & 17.42     &    48.95     &          380.36\\
 5.0 &    24.09  &      48.12   &            414.60\\ 
 50.0 &   30.60   &     45.70    &           519.03\\
 $\infty$  &   31.00 &       35.00  &              1003.47\\
\bottomrule
\end{tabular}
\caption{Synthetic market results for HJB framework optimal strategies. Gives the detailed
results used to construct HJB efficient frontier in Figure \ref{EF_PIDE_nn}. Assumes the scenario given in Table~\ref{base_case_1}. Stock index: real capitalization weighted CRSP stocks;
bond index: ten year treasuries.  Parameters from Table \ref{fit_params}.
Units: thousands of dollars. Statistics based on $2.56 \times 10^6$ Monte Carlo simulation runs.
Control is computed using the Algorithm in Section \ref{algo_section}, ($2048 \times 2048$ grid) stored, and then used in the Monte Carlo simulations.
$q_{\min} = 35.0$, $q_{\max} = 60$.
$(M+1)$ is the number of withdrawals.
$M$ is the number of rebalancing dates.
$\epsilon = 10^{-6}$.
\label{PIDE_ef_details}
}
\end{center}
\end{table}

\begin{table}[h!]
\centering
\text{Detailed Efficient Frontier: NN Framework}\\
\medskip
\begin{tabular}{cccc}
\hline
$\kappa$ & ES (5\%) & $E[\sum_i q_i] / (M + 1)$ & $\text{\emph{Median}}[W_T]$ \\ \hline
0.05 & -599.81 & 57.15 & 106.23  \\
0.2  & -333.01 & 56.14 & 78.59   \\
0.5  & -160.14 & 54.40 & 105.05  \\
1    & -43.02  & 51.95 & 227.79  \\
1.5  & -8.57   & 50.62 & 302.17  \\
3    & 16.01   & 48.99 & 374.43   \\
5    & 23.20   & 48.13 & 425.13  \\
50   & 29.88   & 45.72 & 493.41  \\
$\infty$  & 29.90   & 35.00 & 947.60  \\ \hline
\end{tabular}
\caption{Synthetic market results for NN framework optimal strategies. Gives the detailed
results used to construct NN efficient frontier in Figure \ref{EF_PIDE_nn}. Assumes
the scenario given in Table~\ref{base_case_1}. Stock index: real capitalization weighted CRSP stocks;
bond index: ten year treasuries.  Parameters from Table \ref{fit_params}.
Units: thousands of dollars. Training performance statistics
based on $2.56 \times 10^5$ Monte Carlo simulation runs.
Control is computed using the algorithm in Section \ref{NN_formulation_section}.
$q_{\min} = 35.0$, $q_{\max} = 60$.
$(M+1)$ is the number of withdrawals.
$M$ is the number of rebalancing dates.
$\epsilon = 10^{-6}$.}
\label{nn_ef_details}
\end{table}

\begin{table}[h!]
\centering
\text{Objective Function Value Comparison: HJB Framework vs. NN Framework} \\
\medskip
\begin{tabular}{ccccc}
\hline
$\kappa$ & HJB equation      & NN      & \% difference \\
\hline
0.05              & 1741.54  & 1741.71 & 0.01\%        \\
0.2               & 1674.41 & 1673.81 & -0.04\%       \\
0.5               & 1607.26 & 1606.44 & -0.05\%       \\
1                 & 1568.45  & 1567.34 & -0.07\%       \\
1.5               & 1557.46 & 1556.22 & -0.08\%       \\
3                 & 1569.71  & 1566.86 & -0.18\%       \\
5                 & 1612.16  & 1607.86 & -0.27\%       \\
50                & 2946.70   & 2911.10  & -1.21\%      \\
 \hline
\end{tabular}
\caption{Objective function value comparison for the HJB equation and NN framework model results on range of $\kappa$ values. Objective function values for both frameworks computed according to $PCEE_{t0}(\kappa)$ (higher is better). 
Assuming the scenario given in Table~\ref{base_case_1}. 
Stock index: real capitalization weighted CRSP stocks;
bond index: ten year treasuries.  
Parameters from Table \ref{fit_params}.
HJB solution statistics based on $2.56 \times 10^6$ Monte Carlo simulation runs.
HJB control is computed using the Algorithm in Section \ref{algo_section}, ($2048 \times 2048$ grid) stored, and then used in the Monte Carlo simulations.
NN Training performance statistics based on $2.56 \times 10^5$ Monte Carlo simulation runs.
Control is computed using the NN framework in Section \ref{NN_formulation_section}.
$q_{\min} = 35.0$, $q_{\max} = 60$.
$(M+1)$ is the number of withdrawals.
$M$ is the number of rebalancing dates.
$\epsilon = 10^{-6}$.}
\label{appendix_obj_comparison}
\end{table}

\clearpage

%



\noindent \setlength{\bibsep}{1pt plus 0.3ex}
{\small
\bibliographystyle{mynatbib}
\bibliography{benchmark_NN_bibliography}
}

\end{document}